\tikzstyle{morphism}=[fill=white, draw=black, shape=rectangle]
\tikzstyle{medium box}=[fill=white, draw=black, shape=rectangle, minimum width=0.8cm, minimum height=0.9cm]
\tikzstyle{large morphism}=[fill=white, draw=black, shape=rectangle, minimum width=1.7cm, minimum height=1cm]
\tikzstyle{bn}=[fill=black, draw=black, shape=circle, inner sep=1.5pt]
\tikzstyle{state}=[fill=white, draw=black, regular polygon, regular polygon sides=3, minimum width=0.8cm, shape border rotate=180, inner sep=0pt]
\tikzstyle{medium state}=[fill=white, draw=black, regular polygon, regular polygon sides=3, minimum width=1.3cm, inner sep=0pt, shape border rotate=180]
\tikzstyle{large state}=[fill=white, draw=black, regular polygon, regular polygon sides=3, minimum width=2.2cm, shape border rotate=180, inner sep=0pt]
\tikzstyle{wn}=[fill=white, draw=black, shape=circle, inner sep=1.5pt]
\tikzstyle{large brace}=[draw, decorate, decoration={{brace,amplitude=12pt}}, inner sep=12pt, rotate=-90, xshift=34pt]
\tikzstyle{arrow}=[->]
\tikzstyle{dashed line}=[-, dashed]
\tikzstyle{double line}=[-, double]
\author[Fritz]{Tobias Fritz}
\email{tfritz@pitp.ca}
\address{Perimeter Institute for Theoretical Physics, N2L 2Y5, Waterloo, Ontario, Canada}
\author[Rischel]{Eigil Fjeldgren Rischel}
\email{ayegill@gmail.com}
\address{Deparment of Mathematical Sciences, University of Copenhagen, 2100 Denmark}
\title{Infinite products and zero--one laws in categorical probability}
\date{2020-07-09}
\begin{document}
\maketitle

\begin{abstract}
\emph{Markov categories} are a recent category-theoretic approach to the foundations of probability and statistics. Here we develop this approach further by treating infinite products and the Kolmogorov extension theorem. This is relevant for all aspects of probability theory in which infinitely many random variables appear at a time. These infinite tensor products $\bigotimes_{i \in J} X_i$ come in two versions: a weaker but more general one for families of objects $(X_i)_{i \in J}$ in semicartesian symmetric monoidal categories, and a stronger but more specific one for families of objects in Markov categories.

	As a first application, we state and prove versions of the zero--one laws of Kolmogorov and Hewitt--Savage for Markov categories. This gives general versions of these results which can be instantiated not only in measure-theoretic probability, where they specialize to the standard ones in the setting of standard Borel spaces, but also in other contexts. 

\end{abstract}

\tableofcontents

\section{Introduction}

\emph{Markov categories} are an approach to the foundations of probability and statistics based on category theory, proposed first by Golubtsov~\cite{golubtsov}, rediscovered recently independently by Cho and Jacobs~\cite{cho_jacobs}, and developed extensively by the first-named author in \cite{markov_cats}. The basic observation is that Markov kernels can be composed sequentially and in parallel, making them into a symmetric monoidal category $\Stoch$. One can then find axioms which make a symmetric monoidal category behave similarly to the actual category of Markov kernels, and then state and prove results from probability and statistics in this general abstract context. Thus Markov categories are abstract versions of the category of Markov kernels, and hence their name.

This represents a \emph{synthetic} approach to probability, in contrast with the usual measure-theoretic approach (which can be called \emph{analytic}). Instead of defining \emph{what} the words probability, distribution, random variable, etc, mean semantically, we instead state by which operations these objects can be combined and related to each other, and which properties they are supposed to satisfy with respect to each other.
We can draw an analogy to the Euclidean approach to geometry: in this synthetic approach, points and lines are described only in terms of their relationships with one another. This is to be contrasted with the analytic approach often attributed to Descartes, in which geometric objects are defined concretely in terms of more primitive notions like sets and real numbers.

In this paper, we give a treatment of further fundamental and classical results of probability theory within the setting of Markov categories. This adds further support to the hypothesis that Markov categories are an adequate setting for the foundations of probability and statistics, adding to the existing treatments of Bayesian updating, almost sure equality, conditional independence, and theorems on sufficient statistics~\cite{cho_jacobs,markov_cats}. Concretely, we develop a notion of infinite tensor products in Markov categories and then apply this notion to state and prove two of the standard zero--one laws of probability theory.

\paragraph*{Summary.}

We now present a more detailed overview.

\Cref{background} presents some background by recalling the definition of Markov category, presenting some pertinent examples, and discussing those aspects of the existing theory of Markov categories that will be of relevance for the rest of the paper. With the exception of \Cref{cring_not_causal}, there is nothing new in this section.

\Cref{infprod_semicartesian} then proceeds by introducing infinite tensor products in semicartesian symmetric monoidal categories, which are exactly those symmetric monoidal categories in which the monoidal unit object is terminal. We consider a number of examples of such categories and discuss whether infinite tensor products exist. This already includes examples relevant for probability theory, and in particular $\BorelStoch$, the category of standard Borel spaces and measurable maps\footnote{Or equivalently Polish spaces and measurable maps.}, for which we explain how countably infinite tensor products implement the Kolmogorov extension theorem (for countably many variables); the extension theorem states that the joint distributions of infinitely many random variables are in bijection with compatible families of joint distributions for each finite subset of variables. The universal property of our infinite tensor products makes the extension theorem into a definition, generalized from distributions to Markov kernels. An important subtlety here is the additional preservation condition in \Cref{semicartesian_infproduct}, where infinite tensor products are introduced: this additional condition guarantees that infinite tensor products have nice compositionality properties, so that e.g.~a tensor product of two infinite tensor products is itself an infinite tensor product (\Cref{two_infproducts}). This condition does not come up traditionally in the context of the Kolmogorov extension theorem, the reason being that it automatically holds in the case of $\BorelStoch$ (\Cref{infprods_stoch}).

In the short \Cref{infprod_markov}, we then turn to infinite tensor products in Markov categories, which we call \emph{Kolmogorov products} due to the connection with the Kolmogorov extension theorem. In addition to being an infinite tensor product, a Kolmogorov product also needs to respect the additional structure which makes a semicartesian symmetric monoidal category into a Markov category, namely the comultiplications $\cop{X} : X \to X \otimes X$. This is implemented by postulating that the product projections should be deterministic morphisms, or equivalently that the infinite product should be an honest categorical product in the cartesian monoidal subcategory of deterministic morphisms (\Cref{kolmprod_catdet}). Not surprisingly, the infinite tensor products in $\BorelStoch$, which implement the Kolmogorov extension theorem, are indeed Kolmogorov products.

In \Cref{zeroones}, we state and prove the zero--one laws of Kolmogorov and Hewitt--Savage in their synthetic versions. Very roughly, the Kolmogorov zero--one law states that, given an infinite collection of independent random variables and an event which is in the $\sigma$-algebra generated by the variables but independent of any finite subset of them, the event has probability zero or one. This theorem goes back to Kolmogorov's foundational monograph \cite{kolmogorovgerm}, although the statement appearing there is significantly different from its modern form. For a more recent textbook treatment of this theorem, see e.g. \cite[Theorem~2.37]{klenke}

The Hewitt--Savage zero--one law, due to Hewitt and Savage in \cite[Theorem~11.3]{hewitt_savage}, is similar in flavour but technically quite different. Here, the infinitely many variables are assumed to be identically distributed in addition to being independent, and the event is assumed to be invariant under finite permutations of the variables. Again, under these assumptions, the event has probability zero or one.

Finally in \Cref{examples}, we apply our results to $\BorelStoch$. We thereby rederive the standard zero--one laws for random variables in standard Borel spaces from our abstract theorems. We also instantiate one of our theorems in the Kleisli category of the lower Vietoris monad, which gives criteria for when a continuous map out of infinite product of topological spaces into a Hausdorff space is constant (\Cref{vietoris_kolmogorov}).

\paragraph*{Notation.} 

Throughout, $\cC$ is a symmetric monoidal category. To simplify notation, we generally omit mention of the structure isomorphisms by assuming that $\cC$ is strict without loss of generality. We routinely make use of string diagram notation, and in doing so we omit object labels whenever these are obvious from the context. Our string diagrams are to be read from bottom to top.

In \Cref{infprod_semicartesian}, $\cC$ denotes more concretely a semicartesian strict symmetric monoidal category. In \Cref{infprod_markov} and after, $\cC$ denotes a Markov category in the sense of \Cref{markov_cat}. $J$ is a set used for indexing products of objects; the definitions and results of this paper are nontrivial only when $J$ is infinite. $F$ denotes either an arbitrary finite set, or more concretely a finite subset of $J$.

\paragraph*{Acknowledgments.} We thank Paolo Perrone for discussions and \Cref{poset_case} as well as Kenta Cho for comments on an earlier version of this paper.

\section{Background on Markov categories}\label{background}

We assume familiarity with symmetric monoidal categories, up to and including string diagram notation and string diagram calculus for composite morphisms in symmetric monoidal categories.

A symmetric monoidal category $\cC$ is \emph{semicartesian} if the monoidal unit $I \in \cC$ is terminal. Equivalently~\cite[Theorem~3.5]{GLS}, $\cC$ is semicartesian if it comes equipped with morphisms
\begin{equation}
	\label{projections}
	X \otimes Y \longrightarrow X, \qquad X \otimes Y \longrightarrow Y,
\end{equation}
which are natural in $X$ and $Y$ and coincide with the monoidal structure isomorphisms whenever $X = I$ or $Y = I$. In the probability context, these morphisms can be interpreted as marginalizations, or equivalently as discarding the value of the variable $Y$ or $X$.

\begin{example}
  Any Cartesian monoidal category is obviously semicartesian. In the probabilistic setting that we are considering, this corresponds to a situation in which there is no randomness
  (in such a category, all morphisms are deterministic, cf. \cref{determ_def}). Hence it is somewhat uninteresting from our point of view.
\end{example}

Here are the main examples of semicartesian symmetric monoidal categories that we will be considering.

\begin{example}
	\label{finstoch}
	$\FinStoch$ is the category of finite sets with \emph{stochastic matrices} as morphisms. This means that for finite sets $X$ and $Y$, a morphism $f : X \to Y$ is a matrix $(f_{xy})_{x \in X,y \in Y}$ of nonnegative real numbers such that $\sum_y f_{xy} = 1$ for every $x$. Stochastic matrices compose by matrix multiplication, which in this context is also known as the \emph{Chapman--Kolmogorov equation}. We consider $\FinStoch$ as a symmetric monoidal category with respect to the cartesian product of sets on objects and the tensor product (Kronecker product) of stochastic matrices on morphisms; the monoidal structure isomorphisms are the obvious ones coming from the embedding $\FinSet \subseteq \FinStoch$.
\end{example}

\begin{example}
	\label{cring}
	Let $\CRing_+$ be the category of commutative rings with maps which are merely additive and unit-preserving, considered as a symmetric monoidal category in the obvious way with respect to the tensor product of rings. Then since $\CRing_+$ has the monoidal unit $\Z$ as its initial object, we can conclude that $\CRing_+\op$ is semicartesian monoidal.
\end{example}

\begin{example}
	\label{stoch}
	$\Stoch$ is the category of measurable spaces $(X,\Sigma_X)$ as objects and Markov kernels as morphisms. We sketch the definition here and refer to~\cite[Section~4]{markov_cats} and references therein for the details. Given measurable spaces $(X,\Sigma_X)$ and $(Y,\Sigma_Y)$, a \emph{Markov kernel} $(X,\Sigma_X) \to (Y,\Sigma_Y)$ is a map
	\[
		f \: : \: \Sigma_Y \times X \longrightarrow [0,1], \quad (S,x) \longmapsto f(S|x)
	\]
	assigning to every $x \in X$ a probability measure $f(-|x) : \Sigma_Y \to [0,1]$ in such a way that for every $S \in \Sigma_Y$, the function $x \mapsto f(S|x)$ is measurable. Intuitively, one can think of $f$ as assigning to every $x \in X$ a random element of $Y$; or one can think of $f$ as a statistical model, specifying one probability measure $f(-|x)$ for every parameter value $x$.
	
	Composition of Markov kernels is again defined by a variant of the Chapman--Kolmogorov equation: for $f : (X,\Sigma_X) \to (Y,\Sigma_Y)$ and $g : (Y,\Sigma_Y) \to (Z,\Sigma_Z)$, the composite is given by, for all $x \in X$ and $T \in \Sigma_Z$,
	\[
		(gf)(T|x) := \int_{y \in Y} g(T|y) \, f(dy|x),
	\]
	where the relevant measurability condition in order for this to define a Markov kernel follows by a standard application of the $\pi$-$\lambda$-theorem; associativity of composition uses Fubini's theorem. In this way, we obtain a category $\Stoch$ which is, more or less by definition, isomorphic to the Kleisli category of the Giry monad~\cite{giry}.

	The symmetric monoidal structure on $\Stoch$ is defined in terms of the usual product of measurable spaces, given by the Cartesian product of underlying sets equipped with the product $\sigma$-algebra. The tensor product of morphisms $f : (A,\Sigma_A) \to (X,\Sigma_X)$ and $g : (B,\Sigma_B) \to (Y,\Sigma_Y)$ is given by the unique Markov kernel between the product measurable spaces which satisfies
	\[
		(f \otimes g)(S \times T|a,b) = f(S|a) \, g(T|b) \qquad \forall a \in A,\: b \in B,\: S \in \Sigma_X,\: T \in \Sigma_Y.
	\]
	The monoidal unit $I$ is given by any one-element set with its unique $\sigma$-algebra.
    As a special case, morphisms $I \to (X,\Sigma_X)$ can be identified with probability measures $\Sigma_X \to [0,1]$.
    \hyphenation{sym-me-tric}
   
	There is a well-known subclass of particularly well-behaved measurable spaces, the \emph{standard Borel spaces}, or equivalently Polish spaces equipped with their Borel $\sigma$-algebras. We write\linebreak $\BorelStoch \subseteq\nobreak \Stoch$ for the full subcategory of standard Borel spaces with Markov kernels. Since finite products of standard Borel spaces are again standard Borel, $\BorelStoch$ is again semicartesian symmetric monoidal. In summary, $\BorelStoch$ is the category of standard Borel spaces as objects with measurable Markov kernels as morphisms.
\end{example}

Returning to the general theory, in terms of string diagrams the unique morphism $\discard{X} : X \to I$ for an object $X \in \cC$ is denoted by
\[
	\tikzfig{terminal}
\]
The projection maps $\id \otimes \discard{Y} : X \otimes Y \to X$ and $\discard{X} \otimes \id : X \otimes Y \to Y$, which are the ones from~\eqref{projections}, are correspondingly written as
\[
	\tikzfig{marginals}
\]

Our goal is to use semicartesian monoidal categories in order to develop aspects of probability theory in categorical terms, in such a way that instantiating this theory in $\Stoch$ or $\BorelStoch$ recovers the standard theory. As it turns out, doing so requires a bit more structure, in a form which has been axiomatized first by Cho and Jacobs~\cite{cho_jacobs} as \emph{affine CD-categories}, although very similar definitions occur in earlier work of Golubtsov~\cite{golubtsov}. We here follow the more intuitive terminology of our own~\cite[Definition~2.1]{markov_cats}. 

\begin{definition}
	A \emph{Markov category} $\cC$ is a semicartesian symmetric monoidal category where every object $X \in \cC$ is equipped with a distinguished morphism
	\begin{equation}
		\label{comultiplication}
		\tikzfig{comultiplication}
	\end{equation}
	which, together with $\discard{X} : X \to I$, makes $X$ into a commutative comonoid, and such that
	\begin{equation}
		\label{multiplicativity}
		\tikzfig{multiplicativity}
	\end{equation}
	for all $X, Y \in \cC$.
	\label{markov_cat}
\end{definition}

There is a strictification result~\cite[Theorem~10.17]{markov_cats} which guarantees that $\cC$ can be assumed to be strict monoidal without messing up the commutative comonoid structures. We assume from now on that $\cC$ is strict throughout.

We think of the comultiplication~\Cref{comultiplication} as a copying operation. The coassociativity and counitality conditions guarantee that copying with any number of output wires is well-defined, and we draw it likewise as a single black dot with any number of outgoing wires, like this:
\[
	\tikzfig{triple_copy}
  \]

\Cref{finstoch,cring,stoch} are all Markov categories in a canonical way. In $\FinStoch$ and $\Stoch$, the comultiplication morphisms are given indeed by copying, i.e.~by those stochastic matrices or Markov kernels which map an element $x \in X$ to the Dirac delta measure at $(x,x) \in X \times X$. We refer to~\cite[Example~2.5 and Section~4]{markov_cats} for the technical details. In $\CRing_+\op$, we define the comultiplication of an object $R \in \CRing_+\op$ to be represented by the multiplication map $R \otimes R \to R$. The multiplicativity condition~\eqref{multiplicativity} then amounts to the fact that the multiplication on a tensor product of commutative rings $R \otimes S$ is given by the defining equation
\[
	(r_1 \otimes s_1) (r_2 \otimes s_2) \, = \, r_1 r_2 \otimes s_1 s_2.
\]

The following definition is among the central notions of the theory of Markov categories.

\begin{definition}[{\cite[Definition~10.1]{markov_cats}}]\label{determ_def}
	Let $\cC$ be a Markov category. A morphism $f : X \to Y$ in $\cC$ is \emph{deterministic} if it is a comonoid homomorphism,
	\[
		\tikzfig{multiplication_natural}	
	\]
\end{definition}

As per~\cite[Remark~10.13]{markov_cats}, the deterministic morphisms form a symmetric monoidal subcategory $\cC_\detc \subseteq \cC$ which contains all structure morphisms of $\cC$, including the comultiplications themselves. The fact that all morphisms are comonoid homomorphisms implies that $\cC_\detc$ is actually cartesian monoidal.

In $\FinStoch$, the deterministic morphisms $X \to Y$ are exactly the $\{0,1\}$-valued stochastic matrices, which are indeed those which are deterministic in the sense that they do not involve any randomness. Since these stochastic matrices are in obvious bijection with honest functions $X \to Y$, we conclude that $\FinStoch_\detc$ is equivalent to $\FinSet$. In $\Stoch$, the deterministic morphisms $f : X \to Y$ are exactly those Markov kernels for which $f(S|x) \in \{0,1\}$ for every $S \in \Sigma_Y$ and $x \in X$. In other words, as the term suggests, the deterministic morphisms are again those which do not involve any randomness; in $\BorelStoch$, the deterministic morphisms can be identified with the measurable functions. In $\CRing_+\op$, the deterministic morphisms $R \to S$ are precisely those morphisms which are represented by additive unital maps $S \to R$ which are in addition multiplicative, i.e.~the ring homomorphisms. Thus the subcategory of deterministic morphisms is exactly the opposite of the usual category of commutative rings.

We recall some more definitions that we will be using later on.

\begin{definition}[{\cite[Definition~11.31]{markov_cats}}]
	A Markov category $\cC$ is \emph{causal} if whenever
	\[
		\tikzfig{causal1}
	\]
	holds for morphisms as indicated, then also
	\begin{equation}
		\label{causal2}
		\tikzfig{causal2}
	\end{equation}
	\label{causal_defn}
\end{definition}

Intuitively, this condition states that if the choice between $h_1$ and $h_2$ is irrelevant for what happens ``in the future of $g$'', then that choice is likewise irrelevant for what happened ``in the past of $g$''.
It is known that $\Stoch$ is causal~\cite[Example~11.35]{markov_cats}, and
therefore so are the subcategories $\BorelStoch$ and $\FinStoch$.

\begin{example}
\label{cring_not_causal}
$\CRing_{+}^{\op}$ is \emph{not} causal.
To see this, consider the ring $\mathbb{Z}[t]$, where the following counterexample similar to~\cite[Example~11.33]{markov_cats} can be formulated.
Consider the additive unital maps given as the additive extensions of
\begin{align*}
	f(t^{n}) = {}& \begin{cases}t^{n-1} & n \geq 1 \\ 1 & n = 0 \end{cases}\\[3pt]
	g(t^{n}) = {}& \begin{cases}t & n \geq 1 \\ 1 & n= 0\end{cases}\\[3pt]
	h_{1}(t^{n}) = {}& t^{n}\\[3pt]
	h_{2}(t^{n}) = {}& 1
\end{align*}
On the basis monomials $t^n$, we have $(fg)(t^n) = 1$, and therefore
\[
	(fg)(h_1(t^n)t^m) = 1 = (fg)(h_2(t^n)t^m)
\]
for all $n,m \in \mathbb{N}$, so that the hypothesis holds.
If $\CRing_{+}^{\op}$ were causal, then we would also have
\[
	f(g(h_{1}(t^n) t^m) t^\ell) = f(g(h_{2}(t^n) t^m) t^\ell)
\]
for all $n,m,\ell \in \mathbb{N}$. But this clearly fails for $n = \ell = 1$ and $m = 0$.
\end{example}

The following definition goes back to Cho and Jacobs~\cite[Definition~5.1]{cho_jacobs}. A more detailed investigation of its properties can be found in~\cite[Section~13]{markov_cats}.

\begin{definition}
	\label{defnasequal}
	Given morphisms $p : A \to X$ and $f,g : X \to Y$ in a Markov category, we
	say that $f$ and $g$ are \emph{$p$-a.s.~equal} (or $p$-\emph{almost surely equal}) if
	\[
		\tikzfig{asequal}
	\]
	and we also write this more concisely as $f =_{p\as} g$.
  \end{definition}

  This term comes from measure theory, and should be interpreted as meaning
  that for all those values of $X$ which can occur as outputs of $p$, feeding these values to $f$ and $g$ as inputs is guaranteed to result in the same distribution; at the same time, the behavior of $f$ and $g$ may be different on those input values which do not occur as outputs of $p$.

  We finally introduce a notion of conditional independence, which is a straightforward generalization of~\cite[Defnition~12.12]{markov_cats} from the binary case to the $n$-ary case.

\begin{definition}
	Given a morphism $p : A \to X_1 \otimes \ldots \otimes X_n$ in a Markov category, we say that $p$ \emph{displays the conditional independence} $X_1 \perp \ldots \perp X_n \mid\mid A$ if the equation
	\[
		\tikzfig{cond_ind}
	\]
	holds.
\end{definition}

Conditional independence does not depend on the order of the tensor factors: if $p$ displays the conditional independence $X_1 \perp \ldots \perp X_n \mid\mid A$, then composing $p$ with any permutation $\sigma$ of the tensor factors $X_1,\ldots,X_n$ gives a morphism $A \to X_{\sigma(1)} \otimes \ldots \otimes X_{\sigma(n)}$ which displays the conditional independence $X_{\sigma(1)} \perp \ldots \perp X_{\sigma(n)} \mid\mid A$. Therefore if $\bigotimes_{j \in F} X_j$ is a finite product without any particular order on the factors\footnote{See e.g.~\cite[Proposition~II.1.5]{DMOS} for how to make sense of tensor products in symmetric monoidal categories without a prescribed order of the factors.}, then for $p : A \to \bigotimes_{j \in F} X_j$ there is no ambiguity about whether $p$ displays conditional independence or not, and we also write $\perp_{j \in F} X_j \mid\mid A$ in case that it does.

\section{Infinite tensor products in semicartesian symmetric monoidal categories}
\label{infprod_semicartesian}

Many theorems in probability theory involve infinitely many random variables at a time, together with a joint distribution for them. In the setting of Markov categories, this means that we need to consider morphisms whose codomain is an ``infinite tensor product'' in a suitable sense. In this section, we start by considering infinite tensor products in the more general context of semicartesian symmetric monoidal categories.

Recall that the Kolmogorov's extension theorem states that joint distributions of a given (infinite) family of random variables are in bijection with compatible families of distributions of finite subsets of these variables. Here, a joint distribution of all of them determines such a compatible family by considering all its marginals on finite subsets. This motivates our general definition of infinite tensor products, which we now introduce. Suppose that $(X_i)_{i \in J}$ is any family of objects, where the indexing set $J$ is typically infinite. For any finite subset $F \subseteq J$, we also write $X_F := \bigotimes_{i \in F} X_i$ for simplicity of notation. If $F \subseteq F' \subseteq J$ are two finite subsets, then the fact that $\cC$ is semicartesian monoidal gives us \emph{marginalization morphisms}
\[
	\pi_{F',F} \: : \: X_{F'} \longrightarrow X_F.
\]
Via these maps, the finite tensor products $(X_F)_{F \subseteq J}$ make up a cofiltered diagram in the form of a functor from the poset of finite subsets of $J$, ordered by reverse inclusion, to $\cC$.

\begin{definition}
	\label{semicartesian_infproduct}
	Let $(X_i)_{i \in J}$ be a family of objects in $\cC$. An \emph{infinite tensor product}
	\[
		X_J \: := \: \bigotimes_{i \in J} X_i
	\]
	is the limit of the diagram $F \mapsto X_F$, indexed by the poset of finite subsets $F \subseteq J$ ordered by reverse inclusion, if this limit exists and is preserved by the functor $- \otimes Y$ for every object $Y \in \cC$.
\end{definition}

We will refer to the structure maps $\pi_F : X_J \to X_F$ as \emph{finite marginalizations}. The extra preservation means more concretely that $X_J \otimes Y$ is likewise the cofiltered limit of the $X_F \otimes Y$ with respect to the maps
\[
	\pi_F \otimes \id_Y \: : \: X_J \otimes Y \longrightarrow X_F \otimes Y.
\]
We will motivate this preservation condition in \Cref{preserve}.

Of course, an infinite tensor product may or may not exist in $\cC$. \Cref{infprods_finstoch} will present a simple example in which infinite tensor products only exist for \emph{finite} $J$.

\begin{remark}
	If $J$ is finite, then the infinite tensor product always exists and coincides with the tensor product $X_J = \bigotimes_{i \in J} X_i$ specified by the monoidal structure: then the finite product itself is initial in the defining diagram of the limit, and the preservation condition holds automatically for the same reason. Thus infinite tensor products $\bigotimes_{i \in J} X_i$ are of interest only for infinite $J$, as the terminology suggests. And for finite $J$, the notation is such that $\bigotimes_{i \in J} X_i$ can be interpreted either as specified directly by the monoidal structure, or as an infinite tensor product in the sense of \Cref{semicartesian_infproduct}. Since the result is the same (up to unique isomorphism), there is no ambiguity and the notation is consistent.
\end{remark}

\begin{remark}
	\label{preserve}
	It is natural to require different infinite tensor products to interact well with another. If $X_J$ is an infinite tensor product of a family $(X_i)_{i \in J}$ and we are given an additional object $X_\ast$ for $\ast \not \in J$, then there is a canonical comparison isomorphism
	\begin{equation}
		\label{comparison}
		X_J \otimes X_\ast \longrightarrow X_{J \,\sqcup\, \{\ast\}}
	\end{equation}
	induced via the universal property of $X_{J \,\sqcup\, \{\ast\}}$, which is the infinite tensor product of the original family with $X_\ast$ thrown in. In order for infinite tensor products to be coherent, one will want this comparison isomorphism to be an isomorphism. We now show that this is indeed the case thanks to the preservation condition in \Cref{semicartesian_infproduct}, even if $X_{J \,\sqcup\, \{\ast\}}$ does not exist a priori.
	
	More precisely, we exhibit $X_J \otimes X_\ast$ as the defining cofiltered limit of the infinite tensor product $X_{J \,\sqcup\, \{\ast\}}$ as follows. The object $X_J \otimes X_\ast$ can be equipped with finite marginalization morphisms with respect to $F \subseteq J \sqcup \{\ast\}$ given by
	\[
		\pi_{F \setminus \{\ast\}} \otimes \id \: : \: X_J \otimes X_\ast \longrightarrow X_{F \setminus \{\ast\}} \otimes X_\ast
	\]
	whenever $\ast \in F$, and just by
	\[
		\pi_F \otimes \discard{X_\ast} : X_J \otimes X_\ast \longrightarrow X_F 
	\]
	in case that $\ast \not \in F$. It is straightforward to check that these morphisms indeed make $X_J \otimes X_\ast$ carry the universal property of the infinite tensor product $X_{J \sqcup \{\ast\}}$ if and only if the functor $- \otimes X_\ast$ preserves the defining cofiltered limit of the infinite tensor product $X_J$. It is also easy to see that this new cofiltered limit is again preserved by every functor $- \otimes Y$, based on the assumption that the defining cofiltered limit of the original infinite tensor product is preserved by $- \otimes (X_\ast \otimes Y)$ as well.

	In summary, the preservation condition in this definition amounts to the requirement that the canonical comparison morphism~\eqref{comparison} must be an isomorphism, as one would expect intuitively from a good notion of infinite tensor product. 
\end{remark}

\begin{example}
	\label{infprods_cring}
	The dual definition of infinite tensor products of algebraic structures as \emph{filtered colimits} of finite tensor products is well-known in the literature, e.g.~in the case of C*-algebras~\cite[p.~315]{blackadar}, although we do not know whether the preservation condition has been made explicit before. For example in the case where our category is $\CRing_+\op$, we recover the usual folklore definition of an infinite tensor product of rings $\bigotimes_{i \in J} R_i$ in terms of formal sums of elementary tensors, where an elementary tensor is a family of elements $(r_i)_{i \in J}$ such that all but finitely many are equal to the respective unit. The necessary preservation condition is easily seen to hold. Note that we do not yet consider the multiplication on $\bigotimes_{i \in J} R_i$, since \Cref{semicartesian_infproduct} is not yet concerned with the comonoid structures on the objects. We will get to this in the next section.
	
	Intuitively, our definition matches up with the known algebraic ones under the categorical duality of algebra and geometry, where our \Cref{semicartesian_infproduct} is on the geometrical side of the duality.
\end{example}

\begin{remark}
	In those semicartesian monoidal categories $\cC$ that are of interest to us, morphisms $I \to X$ play the role of \emph{probability measures} on $X$. Thus applying the defining universal property of an infinite tensor product with respect to maps out of $I$ implements the \emph{Kolmogorov extension theorem} in $\cC$: probability measures on an infinite product $X_J$ are in bijection with consistent families of probability measures on the finite products $X_F$ for $F \subseteq J$. The spirit of \Cref{semicartesian_infproduct} is thus such that it turns the Kolmogorov extension theorem into a definition.
\end{remark}

\begin{example}

	\label{infprods_stoch}
	We now consider infinite tensor products in $\Stoch$, where we would like infinite tensor products to be given by the corresponding infinite products of measurable spaces in the usual sense. Since the Kolmogorov extension theorem does not hold for general (even merely countable) products of measurable spaces~\cite{AJ}, this is not the case without further additional assumptions on the measurable spaces involved. Thus we do not know whether $\Stoch$ has infinite tensor products, although we suspect that it does not; but even in the unlikely case that it does, they are in general not the ones that one will care about for the purposes of probability theory.

	However, the situation is much better for countable products in $\BorelStoch$, for which the Kolmogorov extension theorem indeed holds~\cite[Theorem~14.35]{klenke}. In other words, if $\left( (X_i, \Sigma_i) \right)_{i \in \bN}$ is a sequence of standard Borel spaces, then the cartesian product $X_\bN = \prod_{i \in \bN} X_i$, when equipped with the product $\sigma$-algebra $\Sigma_\bN$, satisfies the universal property of an infinite tensor product with respect to maps out of $I$, since the marginalization maps implement a bijection between probability measures on $X_{\bN} = \prod_{i \in \bN} X_i$ and compatible families of probability measures on the finite subproducts $X_F = \prod_{i \in F} X_i$, where the compatibility is with respect to marginalization to smaller subproducts specified by $F' \subseteq F$.
	
	We now prove that this implies the universal property in general: if $A$ and all the $X_i$ are standard Borel spaces, then Markov kernels $A \to X_J$ are in bijection with compatible families of Markov kernels $A \to X_F$. While we just saw that this is the case for $A = I$, we now show that it holds for arbitrary $A$. Thus suppose that
	\[
		\Big( g_F : (A,\Sigma_A) \longrightarrow (X_F, \Sigma_F) \Big)_{F \subseteq J \text{ finite}}
	\]
	is a family of Markov kernels satisfying the compatibility condition $\pi_{F',F} \circ g_{F'} = g_F$ for all finite $F \subseteq F' \subseteq J$. Then for every $a \in A$, the probability measures $g_F(-|a) : \Sigma_F \to [0,1]$ are a compatible family to which the Kolmogorov extension theorem in the form of~\cite[Theorem~14.35]{klenke} applies, and we obtain a unique probability measure $g_J(-|a) : \Sigma_J \to [0,1]$ which has the original $g_F(-|a)$ as its finite marginals. It remains to be shown that for every $S \in \Sigma_J$, the map $a \mapsto g_J(S|a)$ is measurable. Since limits of pointwise convergent sequences of measurable real-valued functions are again measurable, the set of $S$ for which this measurability holds is closed under countable disjoint union, and it is clearly closed under complements. We therefore have a $\lambda$-system. Since the map is measurable by assumption whenever $S$ is a measurable cylinder set, and the cylinder sets form a $\pi$-system, the $\pi$-$\lambda$-theorem implies that $a \mapsto g_J(S|a)$ is measurable for all $S$ in the $\sigma$-algebra generated by the measurable cylinder sets, which is exactly the product $\sigma$-algebra $\Sigma_J$. Hence the existence part of the universal property indeed holds; and the uniqueness part is obvious by considering each $a \in A$ separately and applying the Kolmogorov extension theorem.

	Since the resulting comparison morphism \Cref{comparison} is an isomorphism by construction---both sides are given by the corresponding products of measurable spaces---it follows that the required preservation condition of infinite tensor products holds as well.

	In conclusion, $\BorelStoch$ indeed has countable tensor products. It is plausible that there is another subcategory of $\Stoch$ strictly larger than $\BorelStoch$ which has all infinite tensor products; for example, one can try to construct such a subcategory by imposing compactness or perfectness conditions~\cite[\S{451}]{fremlin4} on the Markov kernels, since under such assumption one again has a version of the Kolmogorov extension theorem~\cite[Corollary~454G]{fremlin4}. However, we have so far not been able to find a definition of ``compact Markov kernel'' or ``perfect Markov kernel'' which would guarantee closure under composition, due to problems with perfect measures not being stable under mixtures~\cite{ramachandran}. Hence the problem of finding a subcategory of $\Stoch$ with all infinite tensor products remains open.
\end{example}

\begin{example}
	\label{infprods_finstoch}
	Infinite tensor products never exist in $\FinStoch$, in the following sense: if the family $(X_i)_{i \in J}$ is such that no $X_i$ is empty and infinitely many of them contain at least two elements, then $\bigotimes_i X_i$ does not exist. One way to see this is to use the fact that the hom-sets of $\FinStoch$ are convex subsets of the vector space of matrices, and that composition distributes over these convex combinations, making $\FinStoch$ into a category enriched in convex sets.\footnote{See also \href{https://golem.ph.utexas.edu/category/2010/09/what_is_this_category_enriched.html}{golem.ph.utexas.edu/category/2010/09/what\_is\_this\_category\_enriched.html} for discussion of this enrichment.} Since these hom-sets are finite-dimensional, for every $Y$ and $Z$ there is $n \in \bN$ such that among every $n$ morphisms $Y \to Z$, one of them can be written as a convex combination of the others. Now suppose that the product $\bigotimes_i X_i$ existed. Then by choosing varying elements of each $X_j$, we could construct uncountably many morphisms $1 \to \bigotimes_i X_i$ whose marginalizations $1 \to X_F$ are all deterministic. Per the above, one of these hypothetical morphisms can be written as a convex combination of finitely many others. By choosing $F \subseteq J$ suitably, we can achieve that the finite marginalizations $1 \to X_F$ of the finitely many morphisms involved in this convex combination are all distinct. Since these morphisms $1 \to X_F$ are deterministic by construction, and no deterministic morphism in $\FinStoch$ can be written as a convex combination of other deterministic morphisms, we have arrived at a contradiction. Thus $\bigotimes_i X_i$ does not exist.
\end{example}

\begin{example}[Paolo Perrone, personal communication]
	\label{poset_case}
	Suppose that $\cC$ is a partially ordered set, considered as a category with $X \geq Y$ if and only if the morphism $X \to Y$ exists.\footnote{This direction of the ordering is opposite to the usual convention, but necessary for the following to work without reversing direction.} Then the monoidal structure makes $\cC$ into a preordered commutative monoid, which we write additively. Here, the functoriality of the monoidal structure amounts to the assumption that addition is monotone,
	\[
		X \geq Y \quad\: \Longrightarrow \quad\: X + Z \geq Y + Z.
	\]
	Furthermore, the monoidal structure is semicartesian if and only if $X \geq 0$ for every $X$. One can then apply \Cref{infprod_operadic} to show that $\cC$ has infinite tensor products if and only if $\cC$ is a \emph{finitary complete monoid} in the sense of Goldstern and Karner~\cite[Section~2.2]{karner}, and these infinite tensor products then correspond to infinite sums $\sum_{i \in J} X_i$ in the monoid.
\end{example}

We return to the general theory.

\begin{lemma}
	\label{two_infproducts}
	Let $J = J_1 \,\sqcup\, J_2$ be a disjoint union and $(X_i)_{i \in J}$ a family of objects in $\cC$. Suppose that the infinite tensor products $X_{J_1}$ and $X_{J_2}$ exist. Then the object
	\[
		X_{J_1} \otimes X_{J_2} 
	\]
	is an infinite tensor product $X_J$ with respect to the finite marginalizations morphisms given by, for every finite $F \subseteq J$,
	\[
		\begin{tikzcd}[column sep=huge]
			\rho_F \: : \: X_{J_1} \otimes X_{J_2} \ar{r}{\pi_{F\cap J_1} \,\otimes\, \pi_{F\cap J_2}} & X_{F \cap J_1} \otimes X_{F \cap J_2}.
		\end{tikzcd}
	\]
\end{lemma}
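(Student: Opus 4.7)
The plan is to exploit the preservation condition of \Cref{semicartesian_infproduct} twice, realizing the defining limit for $X_J$ as an iterated cofiltered limit that can be computed via $X_{J_1}$ and $X_{J_2}$.

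First I would note that, since every finite $F \subseteq J$ decomposes uniquely as $F = (F\cap J_1) \sqcup (F \cap J_2)$, the poset of finite subsets of $J$ under reverse inclusion is canonically isomorphic to the product of the two analogous posets for $J_1$ and $J_2$. Under this isomorphism, the diagram $F \mapsto X_F$ corresponds to $(F_1,F_2) \mapsto X_{F_1} \otimes X_{F_2}$, and the candidate structure morphism $\rho_F$ corresponds to $\pi_{F_1} \otimes \pi_{F_2}$. So it suffices to show that $X_{J_1} \otimes X_{J_2}$ is the limit of the diagram $(F_1,F_2) \mapsto X_{F_1} \otimes X_{F_2}$ with these structure maps.

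Next I would carry out the iterated-limit computation. Fixing a finite $F_1 \subseteq J_1$, the preservation condition for $X_{J_2}$ (applied to the object $X_{F_1}$, together with the symmetry of $\otimes$) exhibits $X_{F_1} \otimes X_{J_2}$ as the cofiltered limit of $X_{F_1} \otimes X_{F_2}$ with structure maps $\id \otimes \pi_{F_2,F_2'}$. Then the preservation condition for $X_{J_1}$ applied to $Y := X_{J_2}$ exhibits $X_{J_1} \otimes X_{J_2}$ as the cofiltered limit of $X_{F_1} \otimes X_{J_2}$ with structure maps $\pi_{F_1,F_1'} \otimes \id$. Because limits commute with limits, these two cofiltered limits combine into a single limit over the product poset indexed by pairs $(F_1, F_2)$, with structure maps $\pi_{F_1} \otimes \pi_{F_2}$. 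Transporting across the bijection of the first paragraph gives precisely the desired cone $(\rho_F)_F$ as a limiting cone.

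Finally, I would verify the new preservation condition: for any $Y \in \cC$, the functor $- \otimes Y$ preserves this limit. Using the associator, $(X_{J_1} \otimes X_{J_2}) \otimes Y \cong X_{J_1} \otimes (X_{J_2} \otimes Y)$, and repeating the two-step argument above but with $Y$ absorbed into the rightmost tensor factor at each stage does the job: the preservation condition for $X_{J_2}$ with respect to $X_{F_1} \otimes Y$ followed by the preservation condition for $X_{J_1}$ with respect to $X_{J_2} \otimes Y$ produces the required limiting cone on $X_{F_1} \otimes X_{F_2} \otimes Y = X_F \otimes Y$. I do not anticipate a substantive obstacle; the only mild subtlety is the bookkeeping of symmetry isomorphisms when moving the tensor factors around, and the invocation of the fact that a cofiltered limit indexed by a product poset may be computed iteratively in either coordinate.
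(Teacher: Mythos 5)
Your proposal is correct and follows essentially the same route as the paper's proof: identifying the indexing poset for $J$ with the product of the posets for $J_1$ and $J_2$, computing the limit iteratively via the two preservation conditions, and carrying the preservation by $-\otimes Y$ through both stages. The only difference is that you spell out the bookkeeping in slightly more detail than the paper does.
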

\begin{proof}
	For fixed finite $F_1 \subseteq J_1$, the morphisms
	\[
		\id \otimes \pi_{F_2} \: : \: X_{F_1} \otimes X_{J_2} \longrightarrow X_{F_1} \otimes X_{F_2}
	\]
	exhibit $X_{F_1} \otimes X_{J_2}$ as the cofiltered limit of the $X_{F_1} \otimes X_{F_2}$ by the preservation assumption. Similarly for varying $F_1$, the morphisms
	\[
		\pi_{F_1} \otimes \id \: : \: X_{J_1} \otimes X_{J_2} \longrightarrow X_{F_1} \otimes X_{J_2}
	\]
	exhibit $X_{J_1} \otimes X_{J_2}$ as the cofiltered limit of the $X_{F_1} \otimes X_{J_2}$. The claimed universal property follows since a limit of limits is a limit, and it is easy to see that the diagram shapes match up: the poset of finite subsets of $J_1 \sqcup J_2$ is the product of the posets of finite subsets of $J_1$ and $J_2$. The preservation property with respect to applying a tensoring functor $- \otimes Y$ carries along both of the above limits individually.
\end{proof}

More generally, we may consider infinite tensor products of infinite tensor products, $\bigotimes_{k \in K} \bigotimes_{j \in J_k} X_{k,j}$, where now $(J_k)_{k \in K}$ is a family of sets and $(X_{k,j})$ a doubly indexed family of objects. One may think that this doubly infinite tensor product should be isomorphic to the single-step infinite tensor product $\bigotimes_{k \in K, \: j \in J_k} X_{k,j}$. And indeed, if for every finite $F \subseteq \prod_{k \in K} J_k$ we choose any finite $G \subseteq K$ such that $(k,j) \in F$ implies $k \in G$, then we have morphisms
\begin{equation}
	\label{double_infproduct}
	\begin{tikzcd}[column sep=1cm]
		\rho_F \; : \; \bigotimes_{k \in K} \bigotimes_{j \in J_k} X_{k,j} \ar{r}{\pi_G} & \bigotimes_{k \in G} \bigotimes_{j \in J_k} X_{k,j} \ar{rr}{\bigotimes_{k \in G} \pi_{F \cap J_k}} & & \bigotimes_{k \in G, \: j \in F \cap J_k} X_{k,j}
	\end{tikzcd}
\end{equation}
where the object on the right can also be written as $\bigotimes_{(k,j) \in F} X_{k,j}$. It is straightforward to see that this $\rho_F$ does not depend on the particular choice of $G$ (up to monoidal structure isomorphisms). We now show that these $\rho_F$'s are finite marginalization morphisms which make the doubly infinite tensor product $\bigotimes_{k \in K} \bigotimes_{j \in J_k} X_{k,j}$ into the single infinite tensor product $\bigotimes_{k \in K, \: j \in J_k} X_{k,j}$.

\begin{lemma}
	\label{infprod_operadic}
	Suppose that $(J_k)_{k \in K}$ is a family of sets and $(X_{k,j})_{j \in J_k, k \in K}$ a doubly indexed family of objects. If all of the infinite tensor products $\bigotimes_{j \in J_k} X_{j,k}$ and $\bigotimes_{k \in K} \bigotimes_{j \in J_k} X_{k,j}$ exist, then the morphisms~\eqref{double_infproduct} exhibit $\bigotimes_{k \in K} \bigotimes_{j \in J_k} X_{k,j}$ as the infinite tensor product $\bigotimes_{k \in K, \: j \in J_k} X_{k,j}$.
\end{lemma}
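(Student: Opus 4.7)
The plan is a nested ``limit of limits is a limit'' argument, generalizing the proof of \Cref{two_infproducts} from the binary case to a general $K$-indexed disjoint union. First I would show by induction on $|G|$ that for every finite $G \subseteq K$, the object $\bigotimes_{k \in G} \bigotimes_{j \in J_k} X_{k,j}$ is itself an infinite tensor product of the subfamily $(X_{k,j})_{k \in G, \: j \in J_k}$ indexed by $\bigsqcup_{k \in G} J_k$, with finite marginalizations defined in analogy to the $\rho_F$ in \eqref{double_infproduct}. The base case $|G| = 1$ is trivial, and the inductive step is exactly \Cref{two_infproducts} applied to the decomposition $\bigsqcup_{k \in G} J_k = \bigsqcup_{k \in G \setminus \{k_0\}} J_k \,\sqcup\, J_{k_0}$ for any chosen $k_0 \in G$. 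The preservation condition under $- \otimes Y$ propagates through the induction since \Cref{two_infproducts} builds the preservation into its conclusion.

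Next, by assumption $\bigotimes_{k \in K} \bigotimes_{j \in J_k} X_{k,j}$ is the cofiltered limit, indexed by finite $G \subseteq K$ under reverse inclusion, of the objects $\bigotimes_{k \in G} \bigotimes_{j \in J_k} X_{k,j}$. Combining with the previous step exhibits $\bigotimes_{k \in K} \bigotimes_{j \in J_k} X_{k,j}$ as the double cofiltered limit over pairs $\bigl(G,\,(F_k)_{k \in G}\bigr)$ of $\bigotimes_{k \in G, \: j \in F_k} X_{k,j}$. Since iterated cofiltered limits assemble into a single cofiltered limit over the product indexing poset, and since this product poset is cofinally equivalent, under reverse inclusion, to that of finite subsets $F \subseteq \bigsqcup_{k \in K} J_k$ via $F \longmapsto \bigl(G_F,\,(F \cap J_k)_{k \in G_F}\bigr)$ (where $G_F$ is the projection of $F$ onto $K$), the doubly infinite tensor product carries the universal property of $\bigotimes_{k \in K, \: j \in J_k} X_{k,j}$. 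An inspection, tracing through the two-step construction, shows that the resulting structure maps coincide with the $\rho_F$ of \eqref{double_infproduct}, independently of the auxiliary choice of $G$ made there.

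Finally, the preservation condition that every functor $- \otimes Y$ preserves this combined limit follows automatically: the outer limit over $G$ is preserved by $- \otimes Y$ by the assumed existence of $\bigotimes_{k \in K} \bigotimes_{j \in J_k} X_{k,j}$ as an infinite tensor product, and each inner limit is preserved by the first step of the argument. The main technical obstacle is the poset-theoretic bookkeeping identifying pairs $(G,(F_k))$ with finite subsets of $\bigsqcup_{k \in K} J_k$, and checking that the induced structure maps really are the $\rho_F$; both verifications are routine once the diagram-shape identification is made, but they do require care since the two-step marginalizations are defined only up to the arbitrary choice of $G$ containing the projection of $F$.
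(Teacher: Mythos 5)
Your proposal is correct and follows essentially the same route as the paper: the paper's proof likewise uses repeated application of \Cref{two_infproducts} to exhibit each $\bigotimes_{k \in G} \bigotimes_{j \in J_k} X_{k,j}$ as an infinite tensor product over $\bigsqcup_{k \in G} J_k$, and then dispatches the universal property and the preservation condition with a diagram chase. You have simply filled in the details the paper leaves implicit, in particular the induction on $|G|$ and the cofinality bookkeeping identifying pairs $(G,(F_k))$ with finite subsets of $\bigsqcup_{k \in K} J_k$.
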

\begin{proof}
	By repeated application of \Cref{two_infproducts}, the second half of~\Cref{double_infproduct} exhibits $\bigotimes_{k \in G} \bigotimes_{j \in J_k} X_{k,j}$ as the infinite tensor product of the $X_{k,j}$ for $k \in G$ and $j \in J_k$. The relevant universal property as well as its preservation by $- \otimes Y$ then follow from another straightforward diagram chase.
\end{proof}

\section{Infinite tensor products in Markov categories}
\label{infprod_markov}

If a Markov category $\cC$ has infinite tensor products, then one should expect a compatibility condition between these infinite tensor products and the comonoid structures on the objects. This is imposed as follows. 

\begin{definition}
    \label{defn_kolmogorov_ext}
    Let $\cC$ be a Markov category and $(X_i)_{i \in J}$ a family of objects. We say that an infinite tensor product $X_J = \bigotimes_{i \in J} X_i$ is a \emph{Kolmogorov product} if the finite marginalization morphisms $\pi_F : X_J \to X_F$ are deterministic.
\end{definition}

In $\BorelStoch$, the infinite tensor products constructed in \Cref{infprods_stoch} are Kolmogorov products. In $\CRing_+\op$, the infinite tensor products from \Cref{infprods_cring} are Kolmogorov products as well, provided that one equips $\bigotimes_i R_i$ with the tensor product ring structure, since this one is the only one which makes the canonical inclusions $R_j \to \bigotimes_i R_i$ into ring homomorphisms. These examples illustrate the following important point. In some Markov categories, such as $\CRing_+\op$, not all isomorphisms are deterministic, and there may even be isomorphic objects which have no deterministic isomorphism~\cite[Remark~10.10]{markov_cats}. In these categories, whether a given infinite tensor product is a Kolmogorov product or not generally depends on the specific choice of that object. In particular, there may be infinite tensor products which are not Kolmogorov products. For example in $\CRing_+\op$, we may take the tensor product of abelian groups $\bigotimes_i R_i$ and equip it with a different multiplication than the tensor product of rings one (as long as it has the same unit), and one will then have an infinite tensor product in $\CRing_+\op$ which is not a Kolmogorov product.

\begin{remark}
	If the Markov category $\cC$ is positive in the sense of~\cite[Definition~11.22]{markov_cats}, then a morphism $A \to B \otimes C$ is deterministic if and only if both of its marginals $A \to B$ and $A \to C$ are deterministic~\cite[Corollary~12.15]{markov_cats}. In this case, it follows that the finite marginalizations $\pi_F$ are deterministic if and only if all the single-factor marginalizations $\pi_{\{i\}} : X_J \to X_i$ are. Thus for an infinite tensor product to be a Kolmogorov product, it is then enough to verify that the $\pi_{\{i\}}$ are deterministic.
\end{remark}

Using the multiplicativity of the comonoid structure
of~\eqref{multiplicativity}, we see that \Cref{two_infproducts,infprod_operadic}
hold also for Kolmogorov products, in the sense that tensoring together
Kolmogorov products, or taking Kolmogorov products of Kolmogorov products, results
again in a Kolmogorov product.

\begin{proposition}
	Every Kolmogorov product is a categorical product in the cartesian monoidal subcategory of deterministic morphisms $\cC_\detc$.
	\label{kolmprod_catdet}
\end{proposition}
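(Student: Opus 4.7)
The plan is to verify the universal property of a categorical product directly. Given $A \in \cC_\detc$ together with deterministic morphisms $f_i : A \to X_i$ for each $i \in J$, I will produce a unique deterministic $g : A \to X_J$ satisfying $\pi_{\{i\}} \circ g = f_i$. Note that the projections $\pi_{\{i\}}$ themselves lie in $\cC_\detc$ by the Kolmogorov product assumption, so the cone is well-defined in $\cC_\detc$ to begin with.

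First I would build $g$ from the finite case. For each finite $F \subseteq J$, the tensor product $X_F$ is a categorical product in the cartesian monoidal category $\cC_\detc$, so the family $(f_i)_{i \in F}$ has a unique deterministic pairing $g_F : A \to X_F$. These pairings automatically satisfy the compatibility condition $\pi_{F',F} \circ g_{F'} = g_F$ for $F \subseteq F'$, so the limit universal property of $X_J$ in $\cC$ yields a unique morphism $g : A \to X_J$ with $\pi_F \circ g = g_F$ for every finite $F$; in particular $\pi_{\{i\}} \circ g = f_i$.

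The main obstacle is to verify that this $g$ is itself deterministic. Compatibility with the discard morphism is automatic in a semicartesian category, so only compatibility with the copy morphism requires work. Here \Cref{two_infproducts} is crucial: it identifies $X_J \otimes X_J$ with the infinite tensor product $\bigotimes_{i \in J \sqcup J} X_i$, whose finite marginalizations are the maps $\pi_{F_1} \otimes \pi_{F_2}$ for finite $F_1, F_2 \subseteq J$. The diagonal pairs $(F, F)$ are cofinal in this indexing poset (take $F = F_1 \cup F_2$), so equality of two morphisms $A \to X_J \otimes X_J$ can be tested after postcomposing with $\pi_F \otimes \pi_F$ for each finite $F$. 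A brief string diagram chase, using that each $\pi_F$ is deterministic (by the Kolmogorov assumption) and that each $g_F$ is deterministic (as a pairing in $\cC_\detc$), shows that both $(\pi_F \otimes \pi_F)$-projections of the two sides of the comonoid homomorphism equation for $g$ reduce to the same morphism built from $g_F$ and the copy maps on $A$ and $X_F$.

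For uniqueness, if $g' : A \to X_J$ is any deterministic morphism with $\pi_{\{i\}} \circ g' = f_i$, then for each finite $F$ the composite $\pi_F \circ g'$ is deterministic with single-factor marginals $f_i$ for $i \in F$, forcing $\pi_F \circ g' = g_F$ by the universal property of $X_F$ as a product in $\cC_\detc$. The uniqueness clause of the defining cofiltered limit of $X_J$ then gives $g' = g$.
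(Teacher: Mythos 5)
Your proof is correct and takes essentially the same route as the paper's: the crucial step in both is to invoke \Cref{two_infproducts} to recognize $X_J \otimes X_J$ as an infinite tensor product, reduce the determinism equation for the induced morphism to a test against $\pi_F \otimes \pi_F$ (taking unions to get a single $F$), and close the diagram chase using determinism of $\pi_F$ and of the finite pairings $g_F$. You merely unpack the paper's concluding appeal to $\lim_{F} \prod_{j \in F} X_j \cong \prod_{j \in J} X_j$ into an explicit construction of the compatible family from the $f_i$ and an explicit uniqueness check, which is a fair expansion rather than a different argument.
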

\begin{proof}
	For $(X_i)_{i \in J}$ a family of objects with Kolmogorov product $X_J$ and finite marginalizations $\pi_F : X_J \to X_F$, we first show that if we have a compatible family of deterministic morphisms $g_F : A \to X_F$, then also the induced $g_J : A \to X_J$ is deterministic. Drawing the Kolmogorov product $X_J$ as a double wire, we need to prove the determinism equation
	\[
		\tikzfig{gJ_det}
	\]
	By \Cref{two_infproducts}, the codomain object $X_J \otimes X_J$ is itself an infinite tensor product, and it is therefore enough to prove that for every finite $F \subseteq J$,
	\[
		\tikzfig{gJ_det2}
	\]
	where we have assumed without loss of generality that the two finite marginalization maps are the same, which we can by replacing the corresponding finite subsets by their union. Using the determinism assumption for $\pi_F$ on the right, as well as the assumption that $g_F = \pi_F \circ g_J$ is deterministic, implies the claim.

	Now since every finite tensor product $X_F$ is a categorical product in $\cC_\detc$, we can use the fact that in every category,
    	\[
		\lim_{F \subseteq J \text{ finite}} \: \prod_{j \in F} X_j \: \cong \: \prod_{j \in J} X_j,
	\]
	and the claim follows.
\end{proof}

In particular, this proves that \Cref{defn_kolmogorov_ext} determines the
comonoid structure on a Kolmogorov product uniquely: $\cop{\bigotimes_i X_i}$
has to be the equal to the corresponding diagonal morphism in $\cC_\detc$, and
\Cref{kolmprod_catdet} determines $\bigotimes_i X_i$ up to unique \emph{deterministic}
isomorphism.

\begin{example}
    \label{setmulti}
    Let $\SetMulti$ be the category with objects ordinary sets, morphisms $f: X \to Y$ given by \emph{multivalued functions}, which are maps $f: X \to \cP(Y)\setminus \emptyset$, with composition of morphisms $f : X \to Y$ and $g : Y \to Z$ given by
    	\begin{equation}
		\label{relational_composition}
		(g\circ f)(x) := \bigcup_{y \in f(x)} g(y).
      \end{equation}
    $\SetMulti$ could also be described as the Kleisli category of the
    \emph{nonempty powerset monad}, which sends a set $X$ to $\cP(X) \setminus \emptyset$. In particular, $\FinSetMulti$ is closely related to $\Rel$, the category of sets and relations, since the latter can equivalently be regarded as the Kleisli category of the powerset monad (where the empty set is allowed). Intuitively, $\SetMulti$ is the closest relative of $\Rel$ to which our formalism can be applied. 
    See also \cite[Example~2.6]{markov_cats} for a treatment of the full subcategory $\FinSetMulti$ on finite sets.

    The Cartesian product of sets equips $\SetMulti$ with a symmetric monoidal structure, if for $f : A \to X$ and $g: B \to Y$ we set
	\[
	(f \tensor g)(a,b) := f(a) \times g(b) \:\subseteq\: X \times Y.
	\]
	The monoidal unit is any one-element set $I$. With the obvious structure isomorphisms inherited from $\Set$, this turns $\SetMulti$ into a semicartesian symmetric monoidal category, and we can give it a Markov category structure by setting
    $\cop{X}(x) := \{(x,x)\} \subset X \times X$.
    Then the deterministic morphisms are precisely those sending each point into a singleton set, resulting in an isomorphism of categories
    $\SetMulti_\detc \cong \Set$.

    Of course, $\Set$ has all infinite products. But perhaps surprisingly, these products are not Kolmogorov products, or even infinite tensor products in $\SetMulti$.
    To see this, consider for example the family $(X_n)_{n\in\bN}$ with $X_n = \{0,1\}$ for all $n$. 
    A map $I \to \prod_{n\in \bN} \{0,1\}$ is just a nonempty subset of this infinite product.
    Given such a subset $U$, the finite marginals $I \to \prod_{n \in F} \{0,1\}$ for finite $F \subset \bN$ are the images of $U$ under the projection map.
    But these images do not determine $U$. For instance, consider the subset $U$ of those sequences containing at least one $1$.
    Its image in every $\prod_{n \in F} \{0,1\}$ is the whole set, since any finite sequence of $0$s and $1$s can be extended to one containing a $1$, but $U$ itself is a proper subset. In particular, $U$ has the same compatible family of finite marginalizations as e.g.~the set of those sequences containing at least one $0$, and hence the uniqueness part of the universal property fails.
    
    It is clear that this counterexample is quite generic and works for any infinite family of nonempty sets, infinitely many of which contain at least two elements, as in \Cref{infprods_finstoch}. Thus $\SetMulti$ is very far from having Kolmogorov products. A good way to remedy this deficiency is to consider topological spaces instead of sets: as we will see in \Cref{examples}, the Kleisli category of the lower Vietoris monad on $\Top$ does have Kolmogorov products, and it can be thought of as the topological counterpart of $\SetMulti$.
\end{example}

The concept of Kolmogorov product describes a notion of infinite collection of random variables whose joint distribution is encoded in its finite marginals. It is natural to postulate the same for when such an infinite collection is considered to be independent.

\begin{definition}
	Let $X_J = \bigotimes_{i \in J} X_i$ be a Kolmogorov product in a Markov category. We say that a morphism $p : A \to X_J$ \emph{displays the conditional independence} $\perp_{i \in J}  X_i \mid\mid A$ if each finite marginalization $p_F : A \to X_F$ displays the conditional independence $\perp_{i \in F} X_i \mid\mid A$.
\end{definition}

\section{The zero--one laws}
\label{zeroones}

We now make use of the theory developed in the previous sections in order to state and prove synthetic versions of the classical zero--one laws of Kolmogorov and Hewitt--Savage.
Throughout this section, $(X_i)_{i \in J}$ is a family of objects in a Markov category $\cC$ with Kolmogorov product $X_J$ and finite marginalizations $\pi_F : X_J \to X_F$.

\begin{lemma}[The infinite independence lemma]
  \label{infindeplemma}
	Suppose $p: A \to X_J$ displays the conditional independence $\perp_i X_i \mid\mid A$.
	Then for every $i \in J$, it also displays the independence $X_i \perp X_{J \setminus \{i\}} \mid\mid A$.
\end{lemma}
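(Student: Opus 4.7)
The plan is to exploit the universal property of the Kolmogorov product in order to reduce the infinite statement to a finite one. What must be shown is that $p$ equals the morphism $(p_{\{i\}} \otimes p_{J \setminus \{i\}}) \circ \cop{A}$, where $p_{\{i\}}$ and $p_{J \setminus \{i\}}$ are the two marginals of $p$ under the decomposition $X_J \cong X_i \otimes X_{J \setminus \{i\}}$. By \Cref{two_infproducts}, the object $X_i \otimes X_{J \setminus \{i\}}$ carries the universal property of the infinite tensor product $X_J$, so two morphisms $A \to X_J$ agree as soon as they agree after every finite marginalization $\pi_F$.

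I would fix a finite $F \subseteq J$ and, without loss of generality (enlarging $F$ if needed), assume $i \in F$. Under the identification from \Cref{two_infproducts}, $\pi_F$ factors as $\id_{X_i} \otimes \pi_{F \setminus \{i\}}$, where the second factor is the relevant finite marginalization of $X_{J \setminus \{i\}}$. Applied to the left-hand side this yields $p_F$, and applied to the right-hand side it yields
\[
  \bigl(p_{\{i\}} \otimes (\pi_{F \setminus \{i\}} \circ p_{J \setminus \{i\}})\bigr) \circ \cop{A} \; = \; (p_{\{i\}} \otimes p_{F \setminus \{i\}}) \circ \cop{A},
\]
by functoriality of $\otimes$ together with the compatibility of finite marginalizations. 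Thus everything comes down to the finite identity $p_F = (p_{\{i\}} \otimes p_{F \setminus \{i\}}) \circ \cop{A}$, i.e.\ the binary conditional independence $X_i \perp X_{F \setminus \{i\}} \mid\mid A$ obtained by grouping the $|F| - 1$ remaining factors.

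For this finite statement I would invoke the hypothesis, which says that each finite marginal $p_{F'}$ displays the full conditional independence $\perp_{j \in F'} X_j \mid\mid A$. Applied to $p_F$, this factors $p_F$ as a single $|F|$-fold copy of $A$ composed with the tensor product of the single-variable marginals $p_{\{j\}}$. Using coassociativity of $\cop{A}$ to split the $|F|$-fold copy into a binary copy whose second branch is an $(|F|-1)$-fold copy, and then collecting the factors indexed by $F \setminus \{i\}$, the second tensor factor becomes precisely the conditional independence factorisation of $p_{F \setminus \{i\}}$ --- which also holds by hypothesis, since $p_{F \setminus \{i\}}$ is itself one of the finite marginals of $p$. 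This yields the desired binary factorisation.

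The only real obstacle I anticipate is bookkeeping: one has to keep careful track of which finite set is being used for which marginalization and copy at each stage, and to verify that the isomorphism $X_J \cong X_i \otimes X_{J \setminus \{i\}}$ matches up finite marginalizations on both sides. The latter compatibility is built into the statement of \Cref{two_infproducts}, and everything else is routine string-diagram manipulation using only coassociativity of $\cop{A}$ and functoriality of $\otimes$.
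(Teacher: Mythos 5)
Your proposal is correct and follows essentially the same route as the paper: reduce the equation to its finite marginalizations via the universal property of $X_i \otimes X_{J \setminus \{i\}}$ as a Kolmogorov product (\Cref{two_infproducts}), then deduce the resulting binary independence $X_i \perp X_{F \setminus \{i\}} \mid\mid A$ from the assumed $|F|$-ary one. You simply spell out the final grouping step (coassociativity of $\cop{A}$ plus the factorization of $p_{F\setminus\{i\}}$) that the paper dispatches with ``this is a consequence of the assumption.''
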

\begin{proof}
	This is nontrivial because the assumption by definition only involves conditional independence of the finite marginalizations. What we need to prove is that
	\[
		\tikzfig{infind_lemma_goal}
	\]
	As the codomain is exactly the Kolmogorov product $X_J$, it is enough to prove this equation upon composing the second output with the finite marginalizations $\pi_F : X_{J\setminus\{i\}} \to X_F$. But this is a consequence of the assumption.
\end{proof}

\begin{lemma}[The determinism lemma]
	\label{determinismlemma}
	Suppose that we have $p: A \to X$ and deterministic $s : X \to T$. If the joint
	\begin{equation}
		\label{determinismlemma_joint}
		\tikzfig{determinismlemma_joint}	
	\end{equation}
	displays the conditional independence $X \perp T \mid\mid A$, then the composite $sp: A \to T$ is deterministic.
\end{lemma}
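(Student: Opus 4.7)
The plan is to unfold the hypothesis into an explicit equation of string diagrams and then post-compose it with $s$ on the $X$-output, exploiting the determinism of $s$ to recognize the resulting equation as the determinism equation for $sp$.

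First I would identify the two marginals of the joint in \eqref{determinismlemma_joint}. Discarding the second output $T$ (via $\discard{T}$) yields simply $p : A \to X$, since $s$ is composed with $\discard{T}$ and thus collapses to $\discard{X}$. Discarding the first output $X$ yields $sp : A \to T$. Hence the assumed conditional independence $X \perp T \mid\mid A$ can be rewritten as the string-diagram equation
\[
	(\id_X \otimes s) \circ \cop{X} \circ p \;=\; (p \otimes sp) \circ \cop{A}.
\]

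Next I would post-compose both sides with $s \otimes \id_T$, i.e.\ apply $s$ to the $X$-wire of this equation. On the right-hand side this immediately yields $(sp \otimes sp) \circ \cop{A}$. On the left-hand side one obtains $(s \otimes s) \circ \cop{X} \circ p$, and here I would invoke the deterministic hypothesis on $s$, namely $(s \otimes s) \circ \cop{X} = \cop{T} \circ s$, to rewrite this as $\cop{T} \circ s \circ p = \cop{T} \circ sp$. Equating the two sides gives exactly the determinism equation $\cop{T} \circ sp = (sp \otimes sp) \circ \cop{A}$ for $sp$.

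There is essentially no obstacle here beyond correctly reading off the two marginals of the joint and being careful that the conditional independence equation is the one I have written. The proof is a two-line diagrammatic manipulation: rewrite the independence assumption, hit the $X$-output with $s$, and collapse the two copies of $s$ into a single $s$ followed by a copy using the determinism of $s$.
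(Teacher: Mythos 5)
Your proof is correct and follows essentially the same route as the paper: rewrite the conditional independence hypothesis as the equation $(\id_X \otimes s) \circ \cop{X} \circ p = (p \otimes sp) \circ \cop{A}$, then apply $s$ to the $X$-wire and use determinism of $s$ to collapse $(s \otimes s)\circ\cop{X}$ into $\cop{T}\circ s$, yielding the determinism equation for $sp$. No gaps.
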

This result can be thought of as a partial converse to the condition which characterizes positivity of a Markov category~\cite[Definition~11.22]{markov_cats}. Here and in the following, our notation ``$s$'' indicates that we think of this morphism as a statistic~\cite[Definition~14.2]{markov_cats}.
\begin{proof}
	Since the marginals of \Cref{determinismlemma_joint} are exactly $p$ and $sp$, the conditional independence assumption is
	\[
		\tikzfig{determinismlemma1}
	\]
	Using this equation together with the assumption that $s$ is deterministic, we then compute
	\[
		\tikzfig{determinismlemma2}
	\]
	which is what was to be shown.
\end{proof}

Here is our abstract version of Kolmogorov's zero--one law.

\begin{theorem}[Abstract Kolmogorov zero--one law]
    \label{thm:kolmog}
    Suppose that $\cC$ is a Markov category, and that $(X_i)_{i \in J}$ is a family of objects with Kolmogorov product $X_J$ and finite marginalizations $\pi_F : X_J \to X_F$. Suppose that morphisms $p: A \to X_J$ and deterministic $s : X_J \to T$ satisfy the following:
    \begin{enumerate}
	    \item $p$ displays the conditional independence $\perp_i X_i \mid\mid A$.
	    \item For every finite $F \subseteq J$, the joint
    		\[
	    		\tikzfig{kolmog_joint}
    		\]
    		displays the conditional independence $X_F \perp T \mid\mid A$.
    \end{enumerate}
Then the composite $sp : A \to T$ is deterministic as well.
\end{theorem}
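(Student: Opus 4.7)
The plan is to apply the determinism lemma (\Cref{determinismlemma}) to $p : A \to X_J$ and the deterministic statistic $s : X_J \to T$. Invoking that lemma reduces the goal to exhibiting that the natural joint $(\id_{X_J} \otimes s) \circ \cop{X_J} \circ p : A \to X_J \otimes T$, whose marginals are $p$ and $sp$, displays the conditional independence $X_J \perp T \mid\mid A$; unfolded, this amounts to the equation
\[
	(\id_{X_J} \otimes s) \circ \cop{X_J} \circ p \;=\; (p \otimes sp) \circ \cop{A}
\]
between two morphisms $A \to X_J \otimes T$.

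The key tool is the preservation clause in \Cref{semicartesian_infproduct}: the functor $- \otimes T$ preserves the defining cofiltered limit of $X_J$, so that $X_J \otimes T$ is itself the cofiltered limit of the $X_F \otimes T$ along the projections $\pi_F \otimes \id_T$. Consequently the displayed equation holds if and only if it holds after postcomposition with $\pi_F \otimes \id_T$ for every finite $F \subseteq J$.

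Carrying out this postcomposition, the left-hand side becomes $(\pi_F \otimes s) \circ \cop{X_J} \circ p$, which is precisely the joint appearing in hypothesis (2); the right-hand side becomes $(\pi_F p \otimes sp) \circ \cop{A}$, the factored form expressing the conditional independence $X_F \perp T \mid\mid A$ of that joint. Equality of the two is therefore granted by hypothesis (2), and assembling these finite-stage equations into the infinite-stage one via the universal property yields the required conditional independence. The determinism lemma then gives that $sp$ is deterministic.

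The main subtlety is the correct use of the preservation clause: without it, knowing only that $X_J$ is a limit would not let one reduce an equation in $X_J \otimes T$ to its finite marginalizations. Hypothesis (1), which via \Cref{infindeplemma} encodes the mutual independence of the $X_i$'s given $A$, supplies the classical context of the Kolmogorov zero--one law, though the argument above routes through hypothesis (2) alone.
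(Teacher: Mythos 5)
Your proof is correct, and it takes a genuinely more direct route than the paper's. The paper adjoins $T$ to the family as a new factor $X_\ast$, observes that the resulting joint $A \to X_J \otimes X_\ast$ displays the full conditional independence $\perp_{i \in J\,\sqcup\,\{\ast\}} X_i \mid\mid A$ (this is where both hypotheses enter), and then invokes the infinite independence lemma (\Cref{infindeplemma}) to extract the binary independence $X_\ast \perp X_J \mid\mid A$ before applying the determinism lemma. You instead verify the binary independence $X_J \perp T \mid\mid A$ directly, by using the preservation clause to recognize $X_J \otimes T$ as the cofiltered limit of the $X_F \otimes T$ and checking the factorization equation on each finite stage, where it is literally hypothesis (2). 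Both arguments rest on the same underlying mechanism---the infinite independence lemma is itself proved by exactly this reduction to finite marginalizations---but yours is shorter and exposes something the paper's packaging obscures: hypothesis (1) is never needed, since hypothesis (2) already supplies all the finite-stage factorizations. (This mirrors the classical situation, where independence of the $X_i$ is used only to \emph{derive} independence of the tail event from each finite block of variables; once that is assumed outright, the $\pi$-$\lambda$ step---here, the uniqueness clause of the preserved limit---finishes the proof.) Your identification of the preservation condition in \Cref{semicartesian_infproduct} as the indispensable ingredient is also exactly right.
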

Note that this result applies even in the case where $J$ is finite, in which case it amounts to \Cref{determinismlemma}.
\begin{proof}
	Assuming $\ast \not \in J$, let us write $X_\ast := T$, so that $X_J \otimes X_\ast$ is the Kolmogorov product of the family $(X_i)_{i \in J \,\sqcup\,\{\ast\}}$. It is clear from the definitions and the assumption that the morphism
    \[
	\tikzfig{kolmog_joint2}	    
    \]
    displays the conditional independence $\perp_{i \in J\,\sqcup\,\{\ast\}} X_i \mid\mid A$.
    By the infinite independence lemma, \cref{infindeplemma}, this means that it also displays the conditional independence $X_\ast \perp X_J \mid\mid A$.
    Now the determinism lemma implies exactly what we want, namely that the composite $sp : A \to T$ is deterministic.
\end{proof}

We now move on to our treatment of the Hewitt--Savage zero--one law. It applies to Kolmogorov products where all factors are copies of the same object. So for an object $X$ and the constant family $(X)_{i \in J}$, let us say that the corresponding Kolmogorov product $X_J$ is the \emph{Kolmogorov power} of $X$ by $J$, whenever it exists.

A \emph{finite permutation} of the set $J$ is a bijection $\sigma : J \to J$ which fixes all but finitely many elements. Suppose that $\sigma : J \to J$ is a finite permutation or more generally any injection. Then an application of the universal property of the Kolmogorov power shows that $\sigma$ induces a morphism $\hat{\sigma} : X_J \to X_J$ by applying the universal property of $X_J$ to the compatible family of composite morphisms
\[
	\begin{tikzcd}
		X_J \ar{r}{\pi_{\sigma(F)}} & X_{\sigma(F)} \ar{r}{\cong} & X_{F},
	\end{tikzcd}
\]
where the second morphism matches up the factors of the two finite products involved as prescribed by the bijection $\sigma|_F : F \stackrel{\cong}{\longrightarrow} \sigma(F)$. The relevant compatibility condition is straightforward to check. Moreover, $\hat{\sigma}$ is deterministic by \Cref{kolmprod_catdet}.

\begin{theorem}[Abstract Hewitt--Savage zero--one law]
    	\label{thm:hewsav}
	Suppose that $\cC$ is a causal Markov category (\Cref{causal_defn}). Let $X_J$ be a Kolmogorov power of an object $X \in \cC$ with respect to an infinite set $J$.
    	Suppose that morphisms $p: A \to X_J$ and deterministic $s : X_J \to T$ satisfy the following:
    	\begin{enumerate}
		\item $p$ displays the conditional independence $\perp_{i \in J} X_i \mid\mid A$.
		\item For every finite permutation $\sigma : J \to J$, we have $\hat{\sigma} p = p$ and $s \hat{\sigma} = s$. 
    	\end{enumerate}
	Then the composite $sp : A \to T$ is deterministic.
\end{theorem}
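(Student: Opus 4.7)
The plan is to verify the hypotheses of the Abstract Kolmogorov zero--one law (\Cref{thm:kolmog}) with the same $p$ and $s$, and then invoke it. Hypothesis~(1) there is exactly our condition~(1), so it remains to check (2): for every finite $F \subseteq J$, the joint $(\pi_F \otimes s) \circ \cop{X_J} \circ p : A \to X_F \otimes T$ must display the conditional independence $X_F \perp T \mid\mid A$.

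Fix finite $F \subseteq J$. Because $J$ is infinite, choose a disjoint $F' \subseteq J \setminus F$ with $|F'| = |F|$, let $\tau$ be the finite permutation of $J$ swapping $F$ and $F'$ elementwise, and write $\phi : X_{F'} \to X_F$ for the isomorphism induced by $\tau|_{F}$. Then $\pi_F \hat\tau = \phi \, \pi_{F'}$, and consequently $\phi p_{F'} = \pi_F \hat\tau p = \pi_F p = p_F$ using $\hat\tau p = p$. Since $\hat\tau$ is deterministic (\Cref{kolmprod_catdet}) and therefore commutes with $\cop{X_J}$, a short string-diagram computation using $\hat\tau p = p$ and $s\hat\tau = s$ gives the symmetry identity
\[
    (\pi_F \otimes s) \circ \cop{X_J} \circ p \;=\; (\phi \otimes \id_T) \circ (\pi_{F'} \otimes s) \circ \cop{X_J} \circ p.
\]

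To finish verifying the displayed independence, I would show that the right-hand side equals the independent product $(\cop{A};\, p_{F'} \otimes sp)$; transferring through $\phi \otimes \id_T$ and using $\phi p_{F'} = p_F$ then converts this into $(\cop{A};\, p_F \otimes sp)$, which is exactly the form required for $X_F \perp T \mid\mid A$. By the infinite independence lemma (\Cref{infindeplemma}) applied to the splitting $J = F' \sqcup (J \setminus F')$, we can factor $p = (\cop{A};\, p_{F'} \otimes p_{J \setminus F'})$. Substituting this into the right-hand side and using the multiplicativity of $\cop{}$ over tensor (\Cref{markov_cat}), one obtains a joint in which a single sample of $p_{F'}$ feeds both the external $X_{F'}$-output and the $F'$-slot of $s$; the task is then to decouple that shared sample into an independent one.

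This decoupling is the main obstacle and is where causality (\Cref{causal_defn}) is essential. The symmetry $s\hat\tau = s$ makes $s$'s $F'$-slot interchangeable with its $F$-slot, which---by the factorization of $p$---is an independent input; causality is precisely what transfers this distributional interchangeability into a genuine factorization at the level of morphisms. This is exactly the step that would fail in non-causal Markov categories such as $\CRing_+^{\op}$ (\Cref{cring_not_causal}), which is why causality appears in the hypothesis. Once the displayed conditional independence $X_F \perp T \mid\mid A$ is established for every finite $F$, \Cref{thm:kolmog} delivers the conclusion that $sp$ is deterministic.
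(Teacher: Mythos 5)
Your reduction to \Cref{thm:kolmog} has a genuine gap at exactly the point where all of the difficulty of the theorem lives. The symmetry identity you derive, $(\pi_F\otimes s)\circ\cop{X_J}\circ p = (\phi\otimes\id_T)\circ(\pi_{F'}\otimes s)\circ\cop{X_J}\circ p$, is correct (it is the finite-permutation invariance that the paper also establishes first), but it only relocates the problem: after the swap, $s$ still genuinely depends on the $F'$-coordinates, so the single copied sample of $p_{F'}$ feeding both the external $X_{F'}$-wire and the $F'$-slot of $s$ is exactly as entangled as before. The step you describe as ``causality is precisely what transfers this distributional interchangeability into a genuine factorization'' is not an argument: the causality axiom (\Cref{causal_defn}) is an implication between two specific string-diagram equations, and you neither exhibit the morphisms $f,g,h_1,h_2$ to which it is to be applied nor explain why any application of it would produce the factorization $(\pi_{F'}\otimes s)\circ\cop{X_J}\circ p = (p_{F'}\otimes sp)\circ\cop{A}$. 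Note that in $\BorelStoch$ with $A=I$ this factorization says that the exchangeable event is independent of every finite block of coordinates, which is essentially the entire content of Hewitt--Savage beyond \Cref{thm:kolmog}; front-loading the whole proof into this one unproved step should be a warning sign. (A smaller issue: \Cref{infindeplemma} as stated splits off a single index $i$, so splitting off a finite set $F'$ needs a routine but unstated generalization.)

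The paper's proof proceeds differently and supplies precisely the ideas your sketch is missing. Causality enters only through \Cref{aseqlemma}, which upgrades the joint-distribution invariance above to the almost-sure equality $s\hat{\sigma} =_{p\as} s$, and---crucially---this is established not just for finite permutations but for arbitrary \emph{injections} $\sigma : J \to J$, by verifying the invariance identity on all finite marginals. One then splits $J = J_1 \sqcup J_2$ into two pieces each of the cardinality of $J$ (this is where infiniteness of $J$ is used), takes injections $\tau_1,\tau_2$ with these disjoint images, and uses the independence hypothesis to show $(\hat{\tau}_1\otimes\hat{\tau}_2)\circ\cop{X_J}\circ p = (p\otimes p)\circ\cop{A}$. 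The conclusion follows by computing $\cop{T}\circ sp = (s\otimes s)\circ\cop{X_J}\circ p = (s\hat{\tau}_1\otimes s\hat{\tau}_2)\circ\cop{X_J}\circ p = (sp\otimes sp)\circ\cop{A}$, using the a.s.-equalities in the middle step. The passage to infinite injections and the two-infinite-halves decomposition do not fit inside your finite-swap framework, so the gap in your proposal is not merely a missing computation but a missing idea.
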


In contrast to \Cref{thm:kolmog}, we now have two relevant assumptions in addition to those on $p$ and $s$, namely that $\cC$ is causal and $J$ is infinite. Before getting to the proof, we first make a useful auxiliary observation.

\begin{lemma}
	\label{aseqlemma}
	Let $\cC$ be a causal Markov category. Let $p : A \to X$ and $f,g: X \to Y$ be morphisms in $\cC$ with $f$ deterministic and such that
	\[
		\tikzfig{aseqlemma}
	\]
	Then $f =_{p\as} g$ (\Cref{defnasequal}).
\end{lemma}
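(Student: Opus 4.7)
The plan is to lift the hypothesized equation to full $p$-a.s.~equality by a single application of the causality axiom (\Cref{causal_defn}), using the determinism of $f$ to align the shapes. I read the diagram $\tikzfig{aseqlemma}$ as expressing the same equation as the definitional a.s.~equality
\[
	(\id_X \otimes f) \circ \cop{X} \circ p \; = \; (\id_X \otimes g) \circ \cop{X} \circ p
\]
but weakened by a discard on one of the output wires, i.e.~an equation valid only ``in the future of'' a distinguished copy of $X$. The causality axiom is tailored to reverse exactly this kind of discard, so the proof will amount to checking that the hypothesis fits the premise of causality and that the conclusion of causality, after a determinism-driven rewrite, becomes the definitional form of $f =_{p\as} g$.

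First, I would set up the causality invocation so that the intermediate morphism (the ``$g$'' of \Cref{causal_defn}) is the morphism $\cop{X} \circ p : A \to X \otimes X$, while the two candidate continuations ($h_1$ and $h_2$ of causality) involve applying $f$ or $g$ to the second copy of $X$. The hypothesis then reads as the equation obtained after discarding the relevant $Y$-wire (or $X$-wire, depending on the precise form of $\tikzfig{aseqlemma}$), so causality promotes it to an equation in which that wire is preserved.

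Second, to massage the strengthened equation into the definitional form of a.s.~equality, I would apply the determinism of $f$ in the shape of the comonoid-homomorphism identity $\cop{Y} \circ f = (f \otimes f) \circ \cop{X}$. This is precisely what lets $f$ commute past a copy, which is the rewrite needed to identify the causality-strengthened equation with
\[
	(\id_X \otimes f) \circ \cop{X} \circ p \; = \; (\id_X \otimes g) \circ \cop{X} \circ p,
\]
the definition of $f =_{p\as} g$.

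The main obstacle is the diagrammatic bookkeeping: the causality axiom has a rigid shape, and matching the hypothesis to its premise requires carefully choosing which wire is discarded and which factor the copy sits on. Determinism of $f$ is doing essential work here: without it, $f$ could not be pulled through the comultiplication, and the asymmetry between $f$ and $g$ in the statement (only $f$ is required deterministic) could not be exploited. Once the matching is done, though, the argument is essentially one line of causality followed by one line of determinism.
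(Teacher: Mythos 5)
There is a genuine gap in how you instantiate the causality axiom. The premise of \Cref{causal_defn} is not the bare equality $h_1 \circ \gamma \circ \alpha = h_2 \circ \gamma \circ \alpha$: it is the equality of the joints $A \to Y \otimes Z$ that retain the \emph{output} of the intermediate morphism $\gamma$ alongside the output of $h_i$; only the \emph{input} wire of $\gamma$ is absent from the premise and resurrected in the conclusion. With your choice of intermediate morphism $\cop{X} \circ p : A \to X \otimes X$ and $h_1, h_2$ given by applying $f$ resp.\ $g$ to one copy of $X$, the causality premise therefore already carries a bare $X$-wire next to the $f$- or $g$-output --- that is, it already asserts (a copy-padded version of) $(\id_X \otimes f)\circ\cop{X}\circ p = (\id_X \otimes g)\circ\cop{X}\circ p$, which is precisely the conclusion $f =_{p\as} g$ you are trying to reach. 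The lemma's hypothesis only controls the joint of the $f$- and $g$-outputs and never exhibits a bare $X$-wire next to a $g$-output, so it cannot supply that premise; the argument is circular as set up. Relatedly, your reading of the hypothesis as the definitional a.s.-equality ``weakened by a discard'' is off: the distinguished wire in the hypothesis is processed by $f$, not discarded, and it is this replacement of $\id_X$ by $f$ that the proof must undo.

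The working configuration, which is the paper's, is essentially the opposite of yours: take $p$ alone as the first morphism of the causality triple, take the intermediate morphism to be the joint $(f \otimes g)\circ\cop{X} : X \to Y \otimes Y$ (so that the premise only ever involves $f$- and $g$-outputs, which the hypothesis controls), and take $h_1, h_2 : Y \otimes Y \to Y$ to be the two marginalizations. Determinism of $f$ is then spent \emph{before} the causality step, not after it: copying the $f$-output equals copying $x$ and applying $f$ twice, and after rewriting the $g$-slot via the hypothesis the same applies there, so both sides of the causality premise reduce to $(f \otimes f \otimes f)\circ\cop{X}^{(3)}\circ p$. Causality then resurrects the $X$-wire coming out of $p$, and discarding the now-redundant middle outputs yields exactly $(\id_X \otimes f)\circ\cop{X}\circ p = (\id_X \otimes g)\circ\cop{X}\circ p$. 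Your high-level plan (one causality application plus one use of determinism) is the right shape, but the specific instantiation you describe would not get off the ground.
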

\begin{proof}
    Consider the morphism
    \[
	    \tikzfig{aseqlemma1}
    \]
    Applying the assumed equation together with determinism of $f$ shows that marginalizing over the first output gives the same morphism $A \to Y \otimes Y \otimes Y$ as marginalizing over the second output. Therefore, applying the causality property of \Cref{causal_defn} with $h_1$ and $h_2$ the two marginalizations, we can conclude the equality
    \[
	\tikzfig{aseqlemma2}
    \]
    Marginalizing this further over the second and third outputs and truncating the redundant pieces proves the desired equation.
\end{proof}

\begin{proof}[Proof of \Cref{thm:hewsav}]
	Letting a finite permutation $\sigma : J \to J$ act on the $X_J$ part of the joint of $X_J$ and $T$ gives
	\begin{equation}
		\label{strong_invariance}
		\tikzfig{hewsav1}
	\end{equation}
	so that the joint given by the right-hand side is invariant. Here, the first and last equation hold by the invariance assumption, and the second because $\hat{\sigma}$ is deterministic.

	We next argue that this equation still holds if $\sigma$ is merely an injection. By the universal property of $X_J \otimes T$ as the infinite tensor product of the family $(X_i)_{i \in J}$ together with $T$, it is enough to prove \Cref{strong_invariance} upon marginalization by $\pi_F \otimes \id$ for all finite $F \subseteq J$. Since we can find a finite permutation $\sigma' : J \to J$ such that $\sigma'|_F = \sigma|_F$ for any given $F$, the claim follows from the finite permutation case.
    
	Consider the morphism given by composing \eqref{strong_invariance} with $s$ on the first output. Since these are the same for all injections $\sigma$, \Cref{aseqlemma} tells us that the morphisms $s \hat{\sigma}$ are all $p$-a.s.~equal to $s$, for every injection $\sigma : J \to J$.

	Now let $J = J_1 \,\sqcup\, J_2$ be a decomposition of $J$ into two disjoint subsets having the same cardinality as $J$; this is the step which relies on $J$ being infinite. Let $\tau_1, \tau_2: J \to J$ be injections with images $J_1$ and $J_2$, respectively. Then we claim that
	\begin{equation}
		\label{hewsav2}
		\tikzfig{hewsav2}
	\end{equation}
	By \Cref{two_infproducts}, it is enough to prove this upon postcomposing with two finite marginalizations $\pi_{F_1}$ and $\pi_{F_2}$. But then using the fact that $\tau_1$ and $\tau_2$ have disjoint image, this follows from the independence assumption, which has
	\[
		\tikzfig{hewsav3}
	\]
	as a special case.
    
	Putting things together, we can now compute
	\[
		\tikzfig{hewsav4}
	\]
	which is what was to be shown.
\end{proof}

\section{Examples}
\label{examples}

We now show how our results specialize to the standard Kolmogorov and Hewitt--Savage zero--one laws in the case of standard Borel spaces. Further down we will give another application to continuous maps between topological spaces.

\begin{corollary}[Kolmogorov zero--one law]
	Suppose $\Omega$ is a standard Borel space with a probability measure $P$ and that $\left(f_i: \Omega \to X_i\right)_{i \in \bN}$ is a sequence of independent random variables taking values in standard Borel spaces $X_i$. Suppose that $T \subseteq \Omega$ is in the coarsest $\sigma$-algebra which makes the $f_i$ measurable, and independent of any finite subset of the $f_i$. Then $P(T) \in \{0,1\}$.
\end{corollary}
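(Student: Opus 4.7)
The plan is to apply \Cref{thm:kolmog} inside the Markov category $\BorelStoch$, which has countable Kolmogorov products by \Cref{infprods_stoch}. I would take $J = \bN$ with Kolmogorov product $X_\bN$ of the given family $(X_i)_{i \in \bN}$, and set $A = I$. Let $f = (f_i)_{i \in \bN} : \Omega \to X_\bN$ be the measurable map assembled from the given random variables, and define $p : I \to X_\bN$ to be the pushforward of $P$ along $f$, i.e.~the joint distribution of the sequence. Because the $\sigma$-algebra on $\Omega$ generated by the $f_i$ is by definition $\{f^{-1}(B) : B \in \Sigma_{X_\bN}\}$, I can pick a measurable $B \subseteq X_\bN$ with $T = f^{-1}(B)$; then, letting $\mathbf{2}$ denote the two-element standard Borel space, I take $s : X_\bN \to \mathbf{2}$ to be the indicator $\mathbf{1}_B$, which is a genuine measurable function and hence deterministic in $\BorelStoch$.

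The next step is to verify the two hypotheses of \Cref{thm:kolmog}. Condition~(i)---that $p$ displays $\perp_{i \in \bN} X_i \mid\mid I$---unfolds by definition to the statement that each finite marginal $p_F$ factors as a product of its single-variable marginals, which is precisely the classical assumption that the $f_i$ are independent. Condition~(ii), applied to a finite $F \subseteq \bN$, asserts that the joint $I \to X_F \otimes \mathbf{2}$ obtained from $p$, $\pi_F$, and $s$ displays $X_F \perp \mathbf{2} \mid\mid I$. This joint is precisely the pushforward of $P$ along $\bigl((f_i)_{i \in F},\, \mathbf{1}_T\bigr)$, and the factorization requirement is exactly the hypothesis that $T$ is probabilistically independent of the $\sigma$-algebra generated by $(f_i)_{i \in F}$.

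With both conditions verified, \Cref{thm:kolmog} yields that $s \circ p : I \to \mathbf{2}$ is deterministic. But $s \circ p$ is the distribution of $\mathbf{1}_T$ under $P$, i.e.~the Bernoulli distribution with parameter $P(T)$, and a deterministic morphism $I \to \mathbf{2}$ in $\BorelStoch$ is a Dirac measure. Hence $P(T) \in \{0, 1\}$.

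The only genuine work is the dictionary in the middle paragraph, translating classical (in)dependence into the string-diagrammatic conditional independence of the paper; I do not anticipate any real obstacle, since both classical independence of random variables and classical independence of an event from a sub-$\sigma$-algebra correspond transparently to the product-measure factorization demanded by the categorical definition when the conditioning object is $I$.
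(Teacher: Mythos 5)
Your proposal is correct and follows essentially the same route as the paper's own proof: same instantiation of \Cref{thm:kolmog} in $\BorelStoch$ with $A = I$, the same $p$ as the pushforward of $P$ along the induced map $\Omega \to X_\bN$, the same indicator-function choice of $s$, and the same identification of a deterministic morphism $I \to \{0,1\}$ with a Dirac measure. Your middle paragraph spelling out why the two conditional-independence hypotheses reduce to the classical independence assumptions is slightly more explicit than the paper, which leaves that translation implicit, but it is not a different argument.
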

\begin{proof}
	We want to apply \Cref{thm:kolmog} with $\cC = \BorelStoch$ and $J = \bN$ and $A = I$, the one-element measurable space. Then $P : I \to \Omega$. We write $f_\bN : \Omega \to X_\bN$ for the induced map to the product measurable space. In $\Omega$, the coarsest $\sigma$-algebra which makes the $f_i$ measurable coincides with the $\sigma$-algebra consisting of the sets of the form $f_\bN^{-1}(S)$ for measurable $S \subseteq X_J$. Hence the assumption implies that there is measurable $S \subseteq X_J$ such that $T = f_\bN^{-1}(S)$. Then we apply \Cref{thm:kolmog} with $s : X_J \to \{0,1\}$ the indicator function of $S$, which is a measurable map and therefore can also be regarded as a morphism in $\BorelStoch$, and with $p = f_\bN P$. We therefore obtain that $sp : I \to \{0,1\}$ is deterministic. But since $sp$ corresponds to the probability measure on $\{0,1\}$ with weight $P(f_\bN^{-1}(S)) = P(T)$ on $\{1\}$ and the complementary weight $1 - P(T)$ on $0$, we conclude that indeed $P(T) \in \{0,1\}$.
\end{proof}

We note that the standard Kolmogorov zero--one law holds for any measurable
spaces, not just for standard Borel spaces, and hence this is a strictly less
general statement. As discussed in \cref{infprods_stoch}, it seems unlikely that
the general statement can be proven by applying our machinery to the category
$\Stoch$ of measurable spaces and Markov kernels. This leads to the following problem:

\begin{problem}
	Is there a Markov category $\cC$ with countable Kolmogorov products such that \Cref{thm:kolmog} specializes to the standard Kolmogorov zero--one law, in its general form applicable to all measurable spaces?
  \end{problem}

\begin{corollary}[Hewitt-savage zero--one law]
    Suppose $\Omega$ is a standard Borel space with probability measure $P$, and let $\left(f_i: \Omega \to X\right)_{i\in \bN}$ be a sequence of independent, identically distributed random variables taking values in a standard Borel space $X$.
    Suppose $S \subseteq \prod_{i \in \bN} X = X_\bN$ is measurable in the product $\sigma$-algebra, and suppose moreover that, for each finite permutation $\sigma: \bN \to \bN$, we have $\hat{\sigma}(S) = S$ (in other words, $S$ is invariant under finite permutations of the variables).
    Then $P(f_\bN\inv(S)) \in \{0,1\}$, where $f_\bN$ again denotes the induced map $\Omega \to X_\bN$.
  \end{corollary}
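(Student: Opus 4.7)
The plan is to mirror the proof of the Kolmogorov corollary just given, applying \Cref{thm:hewsav} with $\cC = \BorelStoch$, $J = \bN$, $A = I$, and $X_\bN = \prod_{i \in \bN} X$, which is a Kolmogorov power of $X$ by \Cref{infprods_stoch}. I would set $p := f_\bN P : I \to X_\bN$, the joint distribution of the $f_i$, and let $s : X_\bN \to \{0,1\}$ be the indicator function of $S$, regarded as a deterministic morphism in $\BorelStoch$. Since $\BorelStoch$ is causal and $\bN$ is infinite, the structural hypotheses of \Cref{thm:hewsav} are met, and it remains only to verify the two conditions on $p$ and $s$.

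For the independence condition, the requirement $\perp_{i \in \bN} X_i \mid\mid I$ unwinds to asking that each finite marginal $p_F : I \to X_F$ factor as the tensor product of its own marginals $(f_i P)_{i \in F}$. This is exactly the measure-theoretic definition of independence of the family $(f_i)_{i \in F}$, which we are given.

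For the invariance condition, I would first identify the abstractly defined morphism $\hat{\sigma} : X_\bN \to X_\bN$ with the concrete measurable coordinate-permutation map on the product space: both are deterministic morphisms inducing the same finite marginalizations, so they coincide by the universal property of the Kolmogorov product. Under this identification, $s \hat{\sigma} = s$ is just the assumed invariance $\hat{\sigma}(S) = S$ read at the level of indicator functions, while $\hat{\sigma} p = p$ reduces, again via the universal property, to checking that each finite marginal distribution of $(f_i)_{i \in \bN}$ agrees with that of $(f_{\sigma(i)})_{i \in \bN}$; this follows from the iid hypothesis, since independence reduces the finite marginals to products of the individual distributions, and identical distribution makes these products invariant under relabelling.

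With all hypotheses in place, \Cref{thm:hewsav} yields that $sp : I \to \{0,1\}$ is deterministic. Since $sp$ is the probability distribution on $\{0,1\}$ with weight $P(f_\bN^{-1}(S))$ on $1$, determinism forces this weight to lie in $\{0,1\}$, as desired. The main step requiring care is the identification of $\hat{\sigma}$ with the concrete coordinate permutation, since $\hat{\sigma}$ is defined abstractly via a universal property; in the standard Borel setting this is a routine check, entirely analogous to the translation steps already carried out in the Kolmogorov corollary.
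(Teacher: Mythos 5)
Your proposal is correct and follows essentially the same route as the paper's own proof: instantiate \Cref{thm:hewsav} in $\BorelStoch$ with $A = I$, $p = f_\bN P$, and $s$ the indicator of $S$, then read off determinism of $sp$. The only difference is that you spell out the verification of the hypotheses (the identification of $\hat{\sigma}$ with the concrete coordinate permutation and the reduction of $\hat{\sigma}p = p$ to the i.i.d.\ assumption) in more detail than the paper, which simply asserts these.
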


\begin{proof}
    The proof is similar to the previous proof. This time we want to apply \Cref{thm:hewsav} with $\cC = \BorelStoch$, $J = \bN$ and $A = I$.
    Again, we have $P: I \to \Omega$.
    As before, we let $s: X_\bN \to \{0,1\}$ be the indicator function of $S$, and we let $p = f_\bN P$.
    The i.i.d.~assumption on the $f_i$ implies that $\hat{\sigma}p = p$, and the assumption on $S$ implies $s\hat{\sigma} = s$ for finite permutations $\sigma$.
    Hence we can apply \Cref{thm:hewsav}. This again shows that the map $sp: I \to \{0,1\}$ is deterministic, which, just as before, means $P(f_\bN\inv(S)) \in \{0,1\}$.
\end{proof} 

By interpreting these results in different Markov categories, we can obtain other results.
In particular, the following example is of a similar flavour as $\SetMulti$, but improves on its lack of infinite tensor products (\Cref{setmulti}).

With $\Top$ the category of topological spaces and continuous maps, we consider the \emph{lower Vietoris monad} or \emph{hyperspace monad} $H$ on $\Top$, introduced by Schalk for $T_0$ spaces~\cite{schalk} and investigated generally in~\cite[Section~2]{hyperspace}. Let $\Kl(H)$ be its Kleisli category. Using the symmetric monoidal structure of $H$ guaranteed by~\cite[Corollary~2.53]{hyperspace}, the general construction of Markov categories from symmetric monoidal monads of \cite[Proposition~3.1]{markov_cats} turns $\Kl(H)$ into a Markov category. Concretely, this category has the following structure:
    \begin{enumerate}
        \item The objects are topological spaces.
	\item A morphism $f: X \to Y$ is a continuous function $X \to HY$, where $HY$ denotes the set of closed subsets of the space $Y$, equipped with the topology generated by subbasic opens of the form
		\[
			\mathrm{Hit}(U) := \{C \subseteq Y \text{ closed} \mid C \cap U \neq \emptyset\},
		\]
		where $U \subseteq Y$ ranges over all opens.
	\item Composition is defined as in $\SetMulti$ via~\eqref{relational_composition}, but taking the closure of the union in addition.
        \item The monoidal product is the Cartesian product of topological spaces, and on morphisms
		\[
			(f \tensor g)(a,b) := f(a) \times g(b),
		\]
		exactly as in $\SetMulti$.
        \item The comultiplication is defined by $\cop{X}(x) = \cl(\{(x,x)\})$
    \end{enumerate}
\begin{proposition}
    \label{infprods_klH}
    $\Kl(H)$ has Kolmogorov products of any cardinality, given by the usual infinite product of topological spaces with the product topology.
\end{proposition}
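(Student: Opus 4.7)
The plan is to verify the two conditions of \Cref{defn_kolmogorov_ext} for $X_J := \prod_{i \in J} X_i$ equipped with the product topology. First, the coordinate projections $\pi_F : X_J \to X_F$ are continuous maps in $\Top$, and any continuous map $\phi : X \to Y$ lifts to a deterministic morphism in $\Kl(H)$ via $x \mapsto \cl(\{\phi(x)\})$. So determinism of the finite marginalizations is automatic, and the content is the universal property of \Cref{semicartesian_infproduct} together with the preservation condition.

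For the universal property, a morphism $g : A \to X_J$ in $\Kl(H)$ is a continuous map $A \to H(X_J)$, while a compatible family $(g_F : A \to X_F)_F$---where compatibility reads $g_F(a) = \cl(\pi_{F',F}(g_{F'}(a)))$---corresponds, by the universal property of limits in $\Top$, to a continuous map $A \to \lim_F H(X_F)$, the limit being formed with the bonding maps $H(\pi_{F',F}) : C \mapsto \cl(\pi_{F',F}(C))$. So the universal property reduces to establishing that the natural map $\Psi : H(X_J) \to \lim_F H(X_F)$, $C \mapsto (\cl(\pi_F(C)))_F$, is a homeomorphism. Continuity of $\Psi$ is immediate from functoriality of $H$. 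Injectivity uses that a closed $C \subseteq X_J$ is determined by its finite marginals: if $x \in C_1 \setminus C_2$, some basic open $\pi_{F_0}^{-1}(V) \ni x$ is disjoint from $C_2$, so $V$ meets $\pi_{F_0}(C_1)$ but not $\pi_{F_0}(C_2)$. Openness of $\Psi$ follows from the direct computation that $\Psi(\mathrm{Hit}(\pi_{F_0}^{-1}(W)))$ equals the subbasic open $\{(C_F)_F : C_{F_0} \cap W \neq \emptyset\}$ of $\lim_F H(X_F)$.

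The main obstacle is surjectivity of $\Psi$. The candidate preimage of a compatible family $(C_F)$ is $C := \bigcap_F \pi_F^{-1}(C_F)$, which is closed in $X_J$ and satisfies $\cl(\pi_F(C)) \subseteq C_F$ trivially. The reverse inclusion is the heart of the argument: given $x \in C_F$ and a basic open $V \ni x$ in $X_F$, one must produce $y \in X_J$ with $\pi_F(y) \in V$ and $\pi_{F'}(y) \in C_{F'}$ for every finite $F'$. The sets $S_{F'} := C_{F'} \cap \pi_{F',F}^{-1}(V)$ are nonempty by compatibility (since $x \in C_F = \cl(\pi_{F',F}(C_{F'}))$ forces the open set $V$ to intersect $\pi_{F',F}(C_{F'})$), and one then checks that the bonding maps $\pi_{F'',F'} : S_{F''} \to S_{F'}$ have dense image, again by compatibility. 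I would assemble $y$ coordinate by coordinate via a transfinite recursion along a well-ordering of $J$ with $F$ as an initial segment, using the density of the bonding maps together with suitable shrinkings of the open $V$ to a smaller basic open at each successor stage, maintaining the invariant that every finite marginal of the current partial assignment lies in the corresponding $C_{F'}$.

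Finally, the preservation condition is verified by applying the same homeomorphism argument to the family $(X_i)_{i \in J}$ augmented by one additional factor $Y$: the product topology on $X_J \times Y$ coincides with the product topology on the enlarged family, so the same reasoning applies verbatim.
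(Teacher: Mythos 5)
Your strategy coincides with the paper's: reduce the universal property to showing that the canonical map $\Psi : H(X_J) \to \lim_F H(X_F)$ is a homeomorphism, and check continuity, injectivity, openness and surjectivity separately. Your treatment of the first three matches the paper's, and your instinct to single out surjectivity---specifically the inclusion $C_F \subseteq \cl(\pi_F(C))$ for the candidate $C = \bigcap_F \pi_F^{-1}(C_F)$---as ``the heart of the argument'' is exactly right: the paper's own proof dismisses this step with the phrase ``the finite marginalizations clearly send it to the given family''. But your transfinite recursion does not close the gap. At each successor stage you must commit to an exact value of the new coordinate $j$, and the admissible values form the intersection of the downward-directed family of nonempty closed slices $\{z \in X_j : (y|_{F'},z) \in C_{F' \cup \{j\}}\}$ over all finite $F'$ already treated; in a non-compact space such an intersection can be empty, and shrinking $V$ cannot help because coordinates once fixed cannot be revisited. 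Your correct observation that the bonding maps between the $S_{F'}$ have dense image only reproduces the original problem one level down.

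In fact no argument can repair this: the proposition is false at this level of generality. Take $J = \bN$, $X_i = \mathbb{Q}$ for all $i$, fix an injective enumeration $(q_k)_{k \geq 1}$ of $\mathbb{Q}$, and set
\[
	D_n \, := \, \bigl\{(x,y_1,\ldots,y_n) \in \mathbb{Q}^{n+1} \: : \: y_k(x - q_k) = 1 \text{ for } 1 \leq k \leq n\bigr\}.
\]
Each $D_n$ is nonempty and closed, and projecting away $y_{n+1}$ gives $\pi(D_{n+1}) = \{(x,\vec{y}) \in D_n : x \neq q_{n+1}\}$, which is dense in $D_n$ because $D_n$ is homeomorphic (via the first coordinate) to the perfect space $\mathbb{Q} \setminus \{q_1,\ldots,q_n\}$; hence $\cl(\pi(D_{n+1})) = D_n$, and defining $D_F := \cl(\pi_{\{0,\ldots,n\},F}(D_n))$ for general finite $F$ yields a genuine compatible cone of morphisms $I \to X_F$ in $\Kl(H)$, i.e.\ a point of $\lim_F H(X_F)$. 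But any nonempty closed $C \subseteq \mathbb{Q}^{\bN}$ with $\cl(\pi_{\{0,\ldots,n\}}(C)) = D_n$ for all $n$ would satisfy $C \subseteq \bigcap_n \pi_{\{0,\ldots,n\}}^{-1}(D_n)$, and an element of that intersection would have first coordinate $x \in \mathbb{Q}$ with $x \neq q_k$ for every $k$, which is impossible; so the cone has no amalgamation and $\mathbb{Q}^{\bN}$ with the product topology fails the universal property of \Cref{semicartesian_infproduct}. (For uncountable $J$ even discrete spaces fail: with $X_i = \bN$ discrete and $C_F$ the set of injections $F \to \bN$ one gets a compatible family whose only candidate amalgamation is empty.) Some additional hypothesis on the spaces---compactness of the $X_i$, say---is needed to make the surjectivity step go through, and that is precisely where any corrected proof must use it.
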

\begin{proof}
    First we observe that $H(\prod_{i\in J} X_i)$ is the cofiltered limit of the spaces
    $H(\prod_{i\in F} X_i)$ for $F \subseteq J$ finite.
    To see this, note that the marginalizations induce a continuous map
    	\[
		\alpha \: : \: H\!\left(\prod_{i\in J} X_i\right) \longrightarrow \lim_{F \subseteq J} \, H\!\left(\prod_{i \in J} X_i\right)
	\]
	We prove that this is a homeomorphism, showing first that it is an injection. Thus we start with distinct closed subsets $C, D \subseteq \prod X_i$. We assume without loss of generality that $C \not\subseteq D$, and choose $x \in C \setminus D$.
    Then let $U \ni x$ be an open neighborhood of $x$ not intersecting $D$.
    By making $U$ smaller if necessary, we can assume without loss of generality that $U$ is a basic open in the product topology, meaning that there is a finite $F$ so that
    \begin{equation}
	    \label{basicU}
		y = (y_i) \in U \quad\Longleftrightarrow\quad y_i \in \pi_i(U) \:\; \forall i \in F.
    \end{equation}
    Then $\pi_F(x) \in \pi_F(U)$, which is disjoint from $\pi_F(D)$. Hence $\alpha(C) \neq \alpha(D)$.

    We show that $\alpha$ is also surjective. Given a family $(C_F) \in \lim_F H\!\left(\prod_{i \in F} X_i\right)$, simply set
	\[
		C := \left\{x \in \prod_{j\in J} X_j \:\Bigg|\: \pi_F(x) \in C_F \:\: \forall F\right\}
	\]
	This is an intersection of closed subsets, hence closed, and the finite marginalizations clearly send it to the given family $(C_F)$.

	Finally, we must verify that $\alpha$ is an open map.
	It suffices to show that it carries a subbasic open set $\mathrm{Hit}(U) = \{C \subseteq \prod X_i \text{ closed} \mid C \cap U \neq \emptyset\}$ to an open subset of the limit. For a given point $C \in \mathrm{Hit}(U)$, it is enough to find an open neighborhood contained in $\mathrm{Hit}(U)$. 
	But then again by choosing smaller $U$ if necessary, so that $C \cap U \neq \emptyset$ still holds, we can assume~\eqref{basicU} without loss of generality.
    Then
	\[
		\alpha(A_U) = \pi_F\inv\left(\left\{D \subseteq \prod_{j \in F} X_F \text{ closed} \:\Bigg|\: D \cap \pi_F(U) \neq \emptyset\right\}\right),
	\]
    which is indeed open in the limit.

    It follows that $\prod_i X_i$ is an infinite tensor product in $\Kl(H)$; the preservation condition is easily verified since adding another factor to this product again gives the infinite product. The Kolmogorov product condition is similarly immediate to verify.
\end{proof}

We can therefore instantiate our \Cref{thm:kolmog} in $\Kl(H)$, which gives the following result as a special case.

\begin{corollary}
    Let $(X_i)_{i \in J}$ be a family of topological spaces, $Y$ a Hausdorff space, and let $f: \prod_i X_i \to Y$ be a continuous function
    which is independent of any finite subset of the input.
    Then $f$ is constant.
    \label{vietoris_kolmogorov}
\end{corollary}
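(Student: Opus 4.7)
The plan is to instantiate \Cref{thm:kolmog} in the Markov category $\Kl(H)$, whose Kolmogorov products are the topological products by \Cref{infprods_klH}. I take $A = I$ to be the one-point space, $p : I \to X_J$ the morphism that sends $\ast$ to the full closed subset $X_J \subseteq X_J$, $T = Y$, and $s = f$. Note that $f$ is deterministic in $\Kl(H)$ because it is an honest continuous function, sending each $x$ to the singleton closed set $\{f(x)\}$ (closed since $Y$ is in particular $T_1$).

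First I would verify that $p$ displays the conditional independence $\perp_{i \in J} X_i \mid\mid I$. Unfolding composition in $\Kl(H)$, the finite marginal $p_F : I \to X_F$ is $\ast \mapsto \overline{\pi_F(X_J)} = X_F$, and the independence equation reduces to the tautology $X_J = \prod_{i \in F} X_i \times \prod_{i \in J \setminus F} X_i$ as subsets of $X_J$.

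Next, I would interpret the hypothesis that $f$ is ``independent of any finite subset of the input'' as precisely the content of hypothesis~(2) in \Cref{thm:kolmog}. Indeed, unfolding Kleisli composition, the joint morphism $I \to X_F \otimes Y$ obtained from $p$ and $s$ works out to be the closed subset
\[
\overline{\{(\pi_F(x), f(x)) : x \in X_J\}} \;\subseteq\; X_F \times Y,
\]
whose marginals on $X_F$ and $Y$ are $X_F$ and $\overline{f(X_J)}$ respectively, so the $X_F \perp Y \mid\mid I$ condition amounts to the equality
\[
\overline{\{(\pi_F(x), f(x)) : x \in X_J\}} \;=\; X_F \times \overline{f(X_J)},
\]
which is the natural topological rendering of independence of $f$ from the $X_F$-coordinates.

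Applying \Cref{thm:kolmog} then yields that $s \circ p : I \to Y$ is deterministic. Concretely, $(sp)(\ast) = \overline{f(X_J)}$ must be a singleton closed subset of $Y$. Here is the one place the Hausdorff (rather than merely $T_1$) hypothesis comes in conceptually: it guarantees that $Y$ is well-behaved enough that a singleton closure forces $f(X_J)$ itself to be a single point, whence $f$ is constant. In summary, the only real work is the translation between topology and the abstract framework; the substantive content is already packaged in \Cref{thm:kolmog} together with \Cref{infprods_klH}.
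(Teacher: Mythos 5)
Your overall strategy is exactly the paper's: instantiate \Cref{thm:kolmog} in $\Kl(H)$ with $p : I \to X_J$ the full closed subset and $s = f$, and the paper likewise treats the verification of the hypotheses as routine unfolding. The one place where you go wrong is the very last step, which happens to be the only point in the corollary where real content remains after citing the theorem. You write that determinism of $sp : I \to Y$ ``concretely'' means that $(sp)(\ast) = \cl(f(X_J))$ is a singleton. That is not what determinism means in $\Kl(H)$: a state $I \to Y$ given by a closed set $C$ is deterministic iff it is a comonoid homomorphism, i.e.\ iff
\[
\cl\bigl(\{(c,c) \mid c \in C\}\bigr) \;=\; C \times C
\]
in $Y \times Y$. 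In a non-Hausdorff space this does not force $C$ to be a singleton (take $C$ the whole space when $Y$ is a two-point indiscrete space). The Hausdorff hypothesis enters precisely here, and this is how the paper concludes: the left-hand side is contained in the diagonal $\Delta_Y$, which is closed exactly when $Y$ is Hausdorff, so $C \times C \subseteq \Delta_Y$ and hence $C$ has at most one point.

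By contrast, the step for which you invoke Hausdorffness --- passing from ``$\cl(f(X_J))$ is a singleton'' to ``$f(X_J)$ is a single point'' --- needs no separation axiom at all, since a nonempty subset of a singleton is that singleton. So your proof has the right shape but places the load-bearing use of the hypothesis in the wrong spot, and as written the assertion ``deterministic $\Rightarrow$ singleton'' is exactly the claim that still needed proving. Everything else (the finite marginals of $p$, the identification of the joint with $\cl(\{(\pi_F(x), f(x)) \mid x \in X_J\})$, and the reading of hypothesis~(2) of \Cref{thm:kolmog}) matches the intended argument.
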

\begin{proof}
	These results follow from \Cref{thm:kolmog} applied to $\Kl(H)$,
    using the morphism $p : I \to X_J$ corresponding to the trivial closed subset $X_J \subseteq X_J$.
	The conclusion is that the closure of its image, call it $A$, satisfies
	\[
		\cl(\{(a,a) \mid a \in A\}) = A \times A
	\]
	as subsets of $Y \times Y$.
    But since the diagonal is closed in $Y$, this clearly implies that $A$ must be a singleton.
\end{proof}

However, we cannot instantiate the Hewitt--Savage zero--one law in $\Kl(H)$, since this Markov category is in fact not causal, as the following example shows.

\begin{example}
	Let $A = \{*\}$ and $X = Y = Z = [0,1]$ with the usual topology.
    Let $f: A \to HX$ send $*$ to $X$, and define
    \[g(x) :=  \begin{cases} [0,1/2] & x < \frac{1}{3}\\ \{1/2\} & \frac{1}{3} \leq x \leq \frac{2}{3}\\ [1/2,1] & x > \frac{1}{3}\end{cases}\]
    \[
    	h_1(y) := [0,1], \qquad h_2(y) = \begin{cases}[0,1] & y \neq \frac{1}{2}\\ \{0\} &  y = \frac{1}{2}\end{cases}
    \]
    Using the definition of the hyperspace topology, it is straightforward to see that these maps are continuous. To check the hypothesis of the statement of causality, we must verify that the resulting two maps $A \to Y \times Z$ agree; these are just closed subsets of $[0,1]^2$. From the definition of the composition, we see that any point $(y,z)$ with $y \in g(x)$ for some $x$ and $z \in h_1(y)$ must lie in the set corresponding to $h_1$,
and similarly for $h_2$.
Hence the set corresponding to $h_1$ contains all points, while the set
corresponding to $h_2$ contains as a subset all points $(y,z)$ with $y\neq 1/2$,
as well as $(1/2,0)$.
But the closure of $\{(y,z) \mid y \neq 1/2\}$ in $[0,1]^2$ is the whole space, so that both sets are indeed equal to the whole space.

Hence the hypothesis is satisfied.
But one can verify that the point $(x,y,z) = (1/2,1/2,1)$ is in the subset of $X \times Y \times Z$ corresponding to the $h_1$ side of \eqref{causal2}, but not so for the $h_2$ side.
\end{example}

We end with an open problem.

\begin{problem}
	Find an interesting causal Markov category which has all Kolmogorov products.
\end{problem}

Strangely enough, the only examples that we know of so far are of two kinds: first, cartesian monoidal categories (meaning that $\cC = \cC_{\det}$) with all products; and second, the finitary complete monoids of \Cref{poset_case} which are monoidal posets. For the former, the universal property of the monoidal structure implies that the causality axiom holds. For the latter, it holds trivially since any two parallel morphisms are equal.

\bibliographystyle{plain}
\bibliography{categorical-zero-one}

\end{document}